\newtheorem{defi}{Definition}[section]
\newtheorem{theorem}[defi]{Theorem}
\newtheorem{lemma}[defi]{Lemma}
\newtheorem{corollary}[defi]{Corollary}
\newtheorem{proposition}[defi]{Proposition}
\newtheorem{remark}[defi]{Remark}
\def\R{\mathbb{R}}
\def\N{\mathbb{N}}
\def\a{\alpha}
\def\e{\varepsilon}
\def\D{\Delta}
\def\la{\lambda}
\def\o{\omega}
\def\intr{\int_{\R^3}}
\def\dis{\displaystyle}
\begin{document}
\title[On a static Schr{\"o}dinger-Poisson-Slater
problem] {Ground and bound states for a static
Schr{\"o}dinger-Poisson-Slater problem}

\author{Isabella Ianni and David Ruiz}

\address{SISSA, via Beirut 2-4, 34014 Trieste (Italy) and Departamento de An\'alisis Matem\'atico, University of Granada, 18071 Granada (Spain)}

\thanks{D.R has been supported
by the Spanish Ministry of Science and Innovation under Grant
MTM2008-00988 and by J. Andaluc\'{\i}a (FQM 116).}

\email{ianni@sissa.it, daruiz@ugr.es}

\keywords{Schr{\"o}dinger-Poisson-Slater problem, Variational methods,
Pohozaev identity, concentration-compactness.}

\subjclass[2000]{}

\maketitle

\begin{abstract}
In this paper the following version of the
Schr{\"o}dinger-Poisson-Slater problem is studied:
$$ - \Delta u +  \left ( u^2 \star \frac{1}{|4\pi x|} \right
) u=\mu |u|^{p-1}u, $$ where $u: \R^3 \to \R$ and $\mu>0$.  The
case $p <2$ being already studied, we consider here $p \geq 2$.
For $p>2$ we study both the existence of ground and bound states.
It turns out that $p=2$ is critical in a certain sense, and will
be studied separately. Finally, we prove that radial solutions
satisfy a point-wise exponential decay at infinity for $p>2$.

\end{abstract}

\section{Introduction}

Recently, many papers have studied different versions of the
Schr{\"o}dinger-Poisson-$X^{\alpha}$ problem:

\begin{equation}\label{eq11} - \Delta u + \o u + \left ( u^2 \star \frac{1}{4\pi|x|} \right )
u= \mu |u|^{p-1}u, \ x \in \R^3, \end{equation} where $\mu>0$. The
interest on this problem stems from the Slater approximation of
the exchange term in the Hartree-Fock model, see \cite{slater}. In
this framework, $p=5/3$ and $\mu$ is the so-called Slater constant
(up to renormalization). However, other exponents have been used
in different approximations; for more information on the relevance
of these models and their deduction, we refer to \cite{bfortunato,
boka, bls, cornean, mauser}.

Our approach is variational, that is, we will look for solutions
of \eqref{eq11} as critical points of the corresponding energy
functional. From a mathematical point of view, this model presents
an interesting competition between local and nonlocal
nonlinearities. This interaction yields to some non expected
situations, as has been shown in the literature (see \cite{a-ruiz,
mugnai, mugnai2, daprile2, daprile3, kikuchi, kikutesis, pisani,
M3AS, JFA, preprint}). Other papers dealing with this kind of
variational problems are \cite{ianni, ianniVaira1, ianniVaira2,
mercuri, oscar, pisani, zhao, zhou}.

In this paper we consider the case $\o=0$. Recall that $\o$
corresponds to the phase of the standing wave for the
time-dependent equation; so, here we are searching static
solutions (not periodic ones). Following \cite{blions} we could
also say that this is a "zero mass" problem, since the linearized
operator at zero involves only the laplacian operator.

The static case has been motivated and studied in \cite{preprint}
as a limit profile for certain problems when $p <2$. Here we study
the existence of ground and bound states for \eqref{eq11} in the
case $p \geq 2$.

The absence of a phase term makes the usual Sobolev space
$H^1(\R^3)$ not to be a good framework for posing the problem
\eqref{eq11}. In \cite{preprint} the following space is
introduced:
$$ E=E(\R^3)= \{ u \in D^{1,2}(\R^3): \int_{\R^3} \int_{\R^3} \frac{u^2(x) u^2(y)}{|x-y|}\, dx \, dy < +\infty\}.$$
The double integral expression is the so-called Coulomb energy of
the wave, and has been very studied, see for instance \cite{lieb}.
In other words, $E(\R^3)$ is the space of functions in
$D^{1,2}(\R^3)$ such that the Coulomb energy of the charge is
finite. We also denote $E_r=E_r(\R^3)$ the subspace of radial
functions.

In \cite{preprint} it is shown that $E \subset L^q(\R^3)$ for all
$q \in [3,6]$, and the embedding is continuous. So, we have that
the energy functional $I_{\mu}: E \to \R$, \begin{equation}
\label{functional} I_{\mu}(u)= \frac 1 2 \int_{\R^3} |\nabla u|^2
\, dx + \frac{1}{4} \intr \intr \frac{u^2(x) u^2(y)}{|x-y|}\, dx
\, dy - \frac{\mu}{p+1} \intr |u|^{p+1}\,dx, \end{equation} is
well-defined and $C^1$ for $p\in[2,5]$. Moreover, its critical
points are solutions of
\begin{equation}\label{eq12} - \Delta u  + \left ( u^2 \star \frac{1}{4\pi|x|} \right )
u= \mu |u|^{p-1}u. \end{equation}

The above preliminary results are discussed in Section 2. Section
3 is devoted to the existence of ground states for $p>2$. The main
result is the following:

\begin{theorem} \label{teo1} Assume $p\in (2,5)$. Then there exists a ground
state for \eqref{eq12}, that is, there exists a positive solution
of \eqref{eq12} with minimal energy (among all nontrivial
solutions). \end{theorem}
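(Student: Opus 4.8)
The plan is to obtain the ground state via a constrained minimization on the Nehari manifold, combined with a concentration-compactness argument to recover compactness of the minimizing sequence. Let me sketch this in detail.

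First I would define the Nehari manifold
\[
\mathcal{N}_\mu = \{ u \in E \setminus \{0\} : \langle I_\mu'(u), u \rangle = 0 \},
\]
so that for $u \in \mathcal{N}_\mu$ one has
\[
\int_{\R^3} |\nabla u|^2\, dx + \intr \intr \frac{u^2(x)u^2(y)}{|x-y|}\, dx\, dy = \mu \intr |u|^{p+1}\, dx.
\]
Since $p > 2$, the nonlocal term is of lower "degree" ($4$) than the nonlinearity ($p+1 > 3$), while the kinetic term has degree $2$; one checks that every $u \neq 0$ has a unique positive multiple $t(u)u$ on $\mathcal{N}_\mu$ (the fibering map $t \mapsto I_\mu(tu)$ has a unique critical point, which is a strict global maximum on $(0,\infty)$ — here the condition $p>2$, equivalently $p+1>3$, is what guarantees the quartic Coulomb term does not dominate). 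It also follows that $c_\mu := \inf_{\mathcal{N}_\mu} I_\mu > 0$, that on $\mathcal{N}_\mu$ the functional is a positive combination of the kinetic term and the $L^{p+1}$ term (so minimizing sequences are bounded in $D^{1,2}$ and in $L^{p+1}$), and — by a standard deformation argument — that any minimizer of $I_\mu$ on $\mathcal{N}_\mu$ is a free critical point, hence a solution of \eqref{eq12}; this is precisely a ground state. I may also pass to $|u|$ and invoke the maximum principle to get positivity.

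Next I would take a minimizing sequence $\{u_n\} \subset \mathcal{N}_\mu$ for $c_\mu$. By the remarks above it is bounded in $D^{1,2}(\R^3)$, hence in $L^q(\R^3)$ for $q \in [3,6]$, and its Coulomb energy is bounded. To locate a nontrivial weak limit I would apply the concentration-compactness principle of P.-L. Lions to the measures $|\nabla u_n|^2$ (or, more conveniently, to a combination reflecting the three terms of the functional). The "vanishing" case is excluded because it would force $\int |u_n|^{p+1} \to 0$, contradicting $u_n \in \mathcal{N}_\mu$ together with $c_\mu > 0$. The "dichotomy" case is the one to rule out carefully: here I would use a subadditivity/strict-subadditivity inequality for the map $\mu \mapsto c_\mu$ (or for the level as a function of the "mass" split between the two pieces), exploiting that the nonlocal Coulomb interaction between two far-apart pieces is strictly positive and decays, so that splitting strictly increases energy — this is the standard mechanism by which the nonlocal term helps compactness. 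After translating the $u_n$ to follow the concentration point, "compactness" gives $u_n \to u$ strongly in $L^{p+1}$ and weakly in $D^{1,2}$; lower semicontinuity of the kinetic and Coulomb terms then yields $u \in \mathcal{N}_\mu$ (after a possible harmless rescaling back onto the manifold) and $I_\mu(u) \le c_\mu$, hence $I_\mu(u) = c_\mu$.

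I expect the main obstacle to be the dichotomy step, and within it the handling of the nonlocal term: one must show that when mass splits into two bumps going to infinite distance from each other, the Coulomb self-energy is (asymptotically) the sum of the two pieces' self-energies \emph{plus} a positive cross term, and control the error terms in the splitting via cut-off functions while keeping the $D^{1,2}$ and Coulomb norms under control (the truncation lemmas for the space $E$ — presumably available from Section 2 and \cite{preprint} — will be used here). A secondary technical point is verifying the uniqueness and good behavior of the projection onto $\mathcal{N}_\mu$; since the fibering map involves a quadratic, a quartic and a $(p+1)$-power term with $2 < 3 < p+1$, one must check that the unique critical point is indeed a maximum and that $\inf_{\mathcal{N}_\mu} I_\mu > 0$ — straightforward but where the hypothesis $p > 2$ genuinely enters. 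The upper bound $p < 5$ enters only through the $C^1$ well-posedness of $I_\mu$ on $E$ and the subcriticality needed for the compact embedding arguments.
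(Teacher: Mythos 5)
Your plan founders at its very first step, and the failure is specific to the subrange $p\in(2,3]$, which is part of the claimed statement. The fibering map along scalar multiples is
\[
t\mapsto I_\mu(tu)=\frac{t^2}{2}\intr|\nabla u|^2+\frac{t^4}{4}\intr\intr\frac{u^2(x)u^2(y)}{|x-y|}\,dx\,dy-\frac{\mu\, t^{p+1}}{p+1}\intr|u|^{p+1},
\]
with exponents $2$, $4$ and $p+1$. The structure you invoke (unique critical point, strict global maximum, $I_\mu(tu)\to-\infty$) requires $p+1>4$, i.e.\ $p>3$, not $p>2$: for $2<p<3$ the quartic Coulomb term dominates at infinity, $I_\mu(tu)\to+\infty$, and the critical-point equation $A+Bt^2=Ct^{p-1}$ (with $A,B,C>0$ the three integrals) has zero or two positive roots depending on $u$. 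Hence the projection onto $\mathcal{N}_\mu$ is neither everywhere defined nor unique, $\mathcal{N}_\mu$ need not be a natural constraint (the derivative of the constraint in the radial direction equals $(1-p)A+(3-p)B$ on $\mathcal{N}_\mu$, which can vanish for $p<3$), and on $\mathcal{N}_\mu$ one computes $I_\mu(u)=\frac14 A+\mu\left(\frac14-\frac{1}{p+1}\right)\intr|u|^{p+1}$, whose second coefficient is \emph{negative} for $p<3$, so minimizing sequences on $\mathcal{N}_\mu$ are not obviously bounded. This is precisely the obstruction the paper flags: boundedness of (PS)-type sequences is easy for $p\geq 3$ but unknown for $p\in(2,3)$.

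The paper circumvents all of this by replacing $tu$ with the dilation $u_t(x)=t^2u(tx)$, under which the kinetic and Coulomb terms both scale like $t^3$ while the power term scales like $t^{2p-1}$, and $2p-1>3$ exactly when $p>2$; this yields the mountain-pass geometry, and Proposition \ref{gs} (via the Pohozaev identity \eqref{eqpoho}) shows that every nontrivial solution lies at or above the mountain-pass level, which is what makes the solution found a ground state. Boundedness of (PS) sequences is then obtained for almost every $\mu$ by the monotonicity trick (Theorem \ref{monotonicity}) and extended to all $\mu$ by combining \eqref{eqpoho} with $I_{\mu}'(u)(u)=0$ to bound the approximating solutions. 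Your concentration-compactness step is also looser than needed: the cross Coulomb interaction between two separating bumps decays like the inverse of their distance, so it vanishes in the limit and cannot by itself rule out dichotomy; one needs genuine strict subadditivity of the level (again proved with the dilation, not with scalar multiples), or, as in Proposition \ref{tipoWillemPag120}, a profile decomposition combined with the lower bound of Proposition \ref{gs}. As written, your argument would prove the theorem only for $p\in(3,5)$.
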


We point out here that we do not know whether the ground state is
radially symmetric or not. Indeed, by restricting ourselves to
$E_r$, we can also show the existence of a radial ground state
(with minimal energy among all nontrivial radial solutions). But
we do not know if both solutions coincide.

The main problem in the proof of Theorem \ref{teo1} is the (PS)
property. First, for this problem it is not yet known if the
Palais-Smale sequences are bounded or not. To face this problem we
use a technique that dates back to Struwe and is usually named
"monotonicity trick" (see \cite{jeanjean, jtoland, struwe}). In so
doing, we can show the existence of bounded (PS) sequences for
almost all values $\mu >0$.

Secondly, bounded (PS) sequences could not converge, due to the
translation invariance of the problem. This problem is solved by
adapting the well-known arguments of concentration-compactness of
Lions (\cite{lions}). In this way, we obtain existence of ground
states for almost all values of $\mu$. With the help of a certain
"Pohozaev identity", we can extend the existence result to all
values of $\mu$. From the Pohozaev identity we also get
non-existence for $p\geq 5$ (see Corollary
\ref{corollnonexistence} in Section 3.)

In Section 4 we are concerned with the existence of multiple
(possibly sign-changing) solutions. Here we restrict ourselves to
the radial case, and work under a convenient constraint. By using
Krasnoselskii genus, we can prove the following result:
\begin{theorem}\label{teoBoundStates} \label{teo2} Assume $p\in (2,5)$. Then there
exist infinitely many radial bound states for \eqref{eq12}.
\end{theorem}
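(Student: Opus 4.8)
The plan is to work in the radial space $E_r$ and exploit the compactness of the embedding $E_r \hookrightarrow L^{p+1}(\R^3)$ for $p \in (2,5)$ (this should follow from the continuous embedding $E \subset L^q$, $q\in[3,6]$, together with a radial-lemma / Strauss-type argument, since $p+1 \in (3,6)$). Because the Palais-Smale sequences are not known to be bounded, I would not try to apply the symmetric mountain-pass theorem directly to $I_\mu$ on $E_r$. Instead, following the same idea used for Theorem \ref{teo1}, I would introduce a natural constraint that forces boundedness: for instance the Nehari-Pohozaev type manifold
\[
\mathcal{M}_r = \{ u \in E_r \setminus \{0\} : J_\mu(u) = 0 \},
\]
where $J_\mu(u)$ is the combination of the Nehari and Pohozaev identities that appears in Section 3 (the functional whose vanishing is satisfied by every solution and which controls the $D^{1,2}$-norm and the Coulomb term from below). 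On such a manifold $I_\mu$ is bounded below and coercive in the relevant norms, and, crucially, $I_\mu$ restricted to $\mathcal{M}_r$ satisfies (PS) thanks to the compact embedding into $L^{p+1}$.

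The second step is to run a minimax scheme over this constraint using the Krasnoselskii genus. Since $I_\mu$ and the constraint are even, $\mathcal{M}_r$ is symmetric ($u \in \mathcal{M}_r \iff -u \in \mathcal{M}_r$), so for each $k \in \N$ one defines
\[
c_k = \inf_{A \in \Gamma_k} \sup_{u \in A} I_\mu(u), \qquad \Gamma_k = \{ A \subset \mathcal{M}_r : A \text{ symmetric, compact, } \gamma(A) \geq k \}.
\]
The key points to verify are: (i) $\Gamma_k \neq \emptyset$ for every $k$, which is obtained by building, for each $k$, a $k$-dimensional subspace of $E_r$ (e.g. spanned by suitably scaled and disjointly/overlapping supported radial bumps) and projecting its unit sphere onto $\mathcal{M}_r$ via the radial dilation/scaling that carries a function to the manifold — one checks the projection is an odd homeomorphism onto its image, hence genus is preserved; (ii) $c_k < +\infty$ for all $k$, immediate from (i); (iii) $c_k \to +\infty$ as $k \to \infty$, which follows by a standard argument: on finite-genus symmetric sets the sup of $I_\mu$ is bounded below in terms of a lower bound for the $L^{p+1}$-norm, and since higher genus forces the set to meet subspaces of arbitrarily high dimension, the energy there must blow up. With the (PS) condition on $\mathcal{M}_r$ in hand, the standard equivariant deformation lemma yields that each $c_k$ is a critical value of $I_\mu|_{\mathcal{M}_r}$, and a Lagrange-multiplier argument (the manifold $\mathcal{M}_r$ being a natural constraint, as in Theorem \ref{teo1}) shows these constrained critical points are genuine critical points of $I_\mu$, i.e. radial solutions of \eqref{eq12}. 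If several $c_k$ coincide, the genus of the critical set is at least $2$, producing infinitely many solutions anyway; in all cases $c_k\to\infty$ forces infinitely many distinct critical values, hence infinitely many geometrically distinct radial bound states.

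The main obstacle I anticipate is proving that the chosen constraint $\mathcal{M}_r$ is a smooth $C^1$ manifold on which $I_\mu$ satisfies (PS) and which is a \emph{natural} constraint — that is, that constrained critical points are free critical points. This requires showing $J_\mu'(u) \neq 0$ on $\mathcal{M}_r$ (non-degeneracy of the constraint), which typically reduces to checking that a certain quadratic-in-$(t,\theta)$ map $(t,\theta)\mapsto I_\mu(t\, u(\cdot/\theta))$ has a unique nondegenerate critical point — the same computation underpinning the ground-state construction in Section 3. A secondary technical point is the compactness $E_r \hookrightarrow\hookrightarrow L^{p+1}$ for $p+1$ strictly between $3$ and $6$; this is where the strict inequalities $2 < p < 5$ are used, exactly as in Theorem \ref{teo1}, and it is what makes the (PS) sequences on the constraint precompact despite the lack of a priori bounds for unconstrained (PS) sequences. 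Once these ingredients are in place, the genus machinery is routine.
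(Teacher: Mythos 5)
Your strategy is genuinely different from the paper's. The paper does not use a Nehari--Pohozaev constraint at all: it fixes the manifold $\mathcal{M}=\{u\in E_r:\ \int_{\R^3}|u|^{p+1}=1\}$ (trivially a smooth symmetric $C^1$ manifold of infinite genus, being homeomorphic to the unit sphere of $E_r$ via the dilation $u\mapsto\lambda^2u(\lambda x)$) and applies the genus minimax to the purely quadratic--quartic functional $J(u)=\frac12\int|\nabla u|^2+\frac14\int\phi_u u^2$, for which coercivity, boundedness below and the (PS) condition on $\mathcal{M}$ are immediate from the compact embedding $E_r\hookrightarrow L^{p+1}$. This produces solutions of $-\Delta u+\phi_u u=\mu_k|u|^{p-1}u$ with Lagrange multipliers $\mu_k=\frac{3(p+1)}{2p-1}b_k>0$, and the multiplier is then \emph{scaled away}: $v(x)=\lambda^2u(\lambda x)$ solves the same equation with $\mu_k$ replaced by $\lambda^{4-2p}\mu_k$, so for $p\neq 2$ one chooses $\lambda$ to normalize the coefficient to the prescribed $\mu$. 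What the paper's route buys is that every analytic step is routine and the only structural input is the homogeneity mismatch $4-2p\neq 0$ (consistently, the device fails at $p=2$, which is exactly why Section 5 cannot remove the multipliers). What your route would buy, if completed, is a direct construction of critical points of $I_\mu$ itself, potentially applicable to non-homogeneous nonlinearities.

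However, as written your proposal has a genuine gap, and it sits precisely at the point you defer to ``the main obstacle I anticipate.'' For the Nehari--Pohozaev manifold $\mathcal{M}_r=\{J_\mu(u)=0\}$ with $J_\mu(u)=\frac32\int|\nabla u|^2+\frac34\intr\intr\frac{u^2(x)u^2(y)}{|x-y|}-\mu\frac{2p-1}{p+1}\int|u|^{p+1}$, the standard way to prove both the natural-constraint property and (PS) is to control the multiplier $\lambda_n$ in $I_\mu'(u_n)-\lambda_nJ_\mu'(u_n)\to 0$ by testing against $u_n$. But on $\mathcal{M}_r$ one computes $J_\mu'(u)(u)=\frac{3(1-p)}{2}\int|\nabla u|^2+\frac{3(3-p)}{4}\intr\intr\frac{u^2(x)u^2(y)}{|x-y|}$, which is \emph{not} sign-definite when $p\in(2,3)$ and may vanish, so this test gives no information on $\lambda_n$ in exactly the parameter range where (PS) sequences of the free functional are not known to be bounded. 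One is forced to test against the dilation field $2u+x\cdot\nabla u$, which does not a priori belong to $E$, so further regularity and approximation arguments are needed; none of this is carried out. Your verification of (i)--(iii) for the genus levels is fine in outline, but until the constraint is shown to be a $C^1$ natural constraint on which the multipliers of (PS) sequences can be controlled, the argument is incomplete. The paper's choice of the $L^{p+1}$-sphere plus the rescaling trick avoids this difficulty entirely, and I would recommend adopting it.
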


As we shall see in the final section, these solutions satisfy a
certain exponential decay, and in particular belong to
$L^2(\R^3)$. This justifies the name of "bound states" for these
solutions.

The case $p=2$ is critical because it presents a certain
invariance, and it is studied in Section 5. Indeed, given a
solution  $u$ of \eqref{eq12} and a parameter $\la$, the family of
functions $\la^2 u(\la x)$ is also a solution.

Then, restricting ourselves to the radial subspace $E_r,$ we can
prove the following result:

\begin{theorem} \label{teo3} There exists an increasing sequence
$\mu_k>0,$ $\mu_k \to +\infty$ such that the problem
\begin{equation}\label{eq13} - \Delta u  + \left ( u^2 \star \frac{1}{4\pi|x|} \right )
u= \mu_k |u|u \end{equation} has a radial solution $u_k$ (indeed,
there is a family of radial solutions given by the invariance of
the problem described above).
\end{theorem}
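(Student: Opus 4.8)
The plan is to exploit the scaling invariance $u\mapsto u_\la:=\la^2u(\la\,\cdot)$ already noted in the introduction. A direct computation shows that under this scaling each of the three terms of $I_\m$ is multiplied by $\la^3$, so $I_\m(u_\la)=\la^3 I_\m(u)$: the functional has no mountain--pass geometry along these orbits, and $\m$ cannot be prescribed in advance --- it has to appear as a Lagrange multiplier. Accordingly, I would work on the slice $\mathcal M=\{u\in E_r:\int_{\R^3}|u|^3=1\}$, which is transverse to the scaling orbits, together with the even $C^1$ functional $\Phi(u)=\frac12\int_{\R^3}|\nabla u|^2+\frac14 D(u)$, where $D(u)=\intr\intr\frac{u^2(x)u^2(y)}{|x-y|}\,dx\,dy$. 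That $\mathcal M$ is a complete symmetric $C^1$ submanifold of $E_r$ with $0\notin\mathcal M$ follows from the continuous embedding $E\subset L^3$ (and the fact that $u\mapsto\int|u|^3$ is $C^1$, since $I_\m$ is $C^1$ at $p=2$). If $u$ is a critical point of $\Phi|_{\mathcal M}$, then the Lagrange multiplier rule on $E_r$ combined with Palais' principle of symmetric criticality yields that $u$ solves $-\D u+\big(u^2\star\frac{1}{4\pi|x|}\big)u=\m\,|u|u$ in $E$, with $\m=\langle\Phi'(u),u\rangle=\int|\nabla u|^2+D(u)>0$; moreover the whole scaling family $\{\la^2 u(\la\,\cdot)\}_{\la>0}\subset E_r$ then solves it with the same $\m$.

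To produce infinitely many such critical points I would use the Krasnoselskii genus exactly as in Section~4. Let $\G_k$ be the family of closed symmetric $A\subset\mathcal M$ with $\g(A)\ge k$, and set $c_k=\inf_{A\in\G_k}\sup_{u\in A}\Phi(u)$. Two facts are needed. First, $\Phi|_{\mathcal M}$ is bounded below by a positive constant: applying $E\subset L^3$ along spatial dilations and optimizing the dilation parameter gives the scale--covariant inequality $\|u\|_3^3\le C\,\|\nabla u\|_2\,D(u)^{1/2}$, whence on $\mathcal M$, using $2ab\le a^2+b^2$ and $\|\nabla u\|_2^2+D(u)\le 4\Phi(u)$, one gets $1=\|u\|_3^3\le 2C\,\Phi(u)$. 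Second, $\Phi|_{\mathcal M}$ satisfies the Palais--Smale condition and its sublevel sets are relatively compact (hence of finite genus). Granting these, each $c_k$ is a critical value, $c_k\le c_{k+1}<+\infty$ (intersect $\mathcal M$ with $k$--dimensional subspaces of $E_r$), and $c_k\to+\infty$ (otherwise, taking any $c^*>\sup_k c_k$, the sublevel $\{\Phi\le c^*\}$ would have infinite genus but be relatively compact, a contradiction). Choosing critical points $u_k$ at level $c_k$ and setting $\m_k=\int|\nabla u_k|^2+D(u_k)$, we have $\m_k\ge 2\Phi(u_k)=2c_k\to+\infty$; passing to a subsequence and relabelling, $\m_k$ is strictly increasing and $u_k$ is the desired radial solution of \eqref{eq13}.

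The main obstacle is the compactness claim, and this is precisely where $p=2$ is critical: the nonlinearity $|u|u$ sits at the endpoint $L^3$ of the admissible range $[3,6]$, and $E_r\hookrightarrow L^3$ is \emph{not} compact. For a (PS) sequence $(u_n)\subset\mathcal M$ one has $\|\nabla u_n\|_2$ and $D(u_n)$ bounded, so $u_n\rightharpoonup u$ in $E_r$ and $u_n\to u$ in $L^3_{loc}$; the only way $L^3$--mass can be lost is by escaping to spatial infinity. To rule this out I would use Newton's theorem: for radial $u$ and every $R>0$ one has $D(u)\ge c\,R\,\big(\int_{|x|>R}\tfrac{u^2}{|x|}\,dx\big)^2$, so the weighted tail $\int_{|x|>R}\tfrac{u^2}{|x|}$ is $O(R^{-1/2})$ uniformly in $n$; combining this with the bound on $\|\nabla u_n\|_2$ on dyadic annuli, a short computation shows that an $L^3$--bump carrying a fixed amount of mass around radius $\sim R$ would have to satisfy incompatible upper and lower bounds on its amplitude (of the orders $R^{-1}$ and $R^{-1/2}$) as $R\to+\infty$. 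Hence no mass escapes, $u_n\to u$ in $L^3$, $u\in\mathcal M$, and strong convergence in $E_r$ is recovered from the equation; the same estimates give relative compactness of sublevel sets. (A Pohozaev identity forces every solution at $p=2$ to satisfy $\int|\nabla u|^2=\tfrac12 D(u)$ and $I_\m(u)=0$, consistently with $\m_k=2\Phi(u_k)$, but it is not needed for the argument.)
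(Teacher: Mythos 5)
Your overall strategy coincides with the paper's: restrict to $E_r$, constrain to $\mathcal M=\{u\in E_r:\int_{\R^3}|u|^3=1\}$, run the Krasnoselskii-genus min-max for the functional $J$ of Section 4, obtain the $\mu_k$ as Lagrange multipliers diverging with the critical levels, and use the scaling invariance $u\mapsto\lambda^2u(\lambda\cdot)$ to explain why $\mu$ cannot be prescribed and why a whole family of solutions appears. The paper simply observes that the entire Section 4 machinery applies verbatim at $p=2$ and reads off $\mu_k=3b_k\to+\infty$ from Lemma \ref{EstimateOfMu}; your bound $\mu_k\geq 2c_k$ plays the same role.

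The one substantive divergence is the compactness step, and there you start from a false premise: the embedding $E_r\hookrightarrow L^3(\R^3)$ \emph{is} compact. This is proved in \cite{preprint} (for the radial subspace the compact range is $p\in(18/7,6)$, strictly larger than the continuous range $[3,6]$ valid for all of $E$), and it is precisely what Section 5 invokes to transplant Lemma \ref{lemma(PS)OnM} to $p=2$; so $L^3$ is not an "endpoint" for $E_r$. Your substitute argument is in effect a re-derivation of a piece of that embedding theorem, but as sketched it does not close: an amplitude upper bound of order $R^{-1/2}$ (from the gradient on annuli) and the lower bound of order $R^{-1}$ (needed for a bump of width $\sim R$ to carry fixed $L^3$ mass) are \emph{not} incompatible, since $R^{-1}<R^{-1/2}$. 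To get a contradiction you must actually use the Coulomb tail estimate $\int_{|x|>R}u^2/|x|\lesssim R^{-1/2}$: for a bump of width $w$ at radius $R$ it forces amplitude $\lesssim R^{-3/4}w^{-1/2}$, and optimizing jointly with the gradient constraint shows the $L^3$ mass of any such bump is $O(R^{-3/8})\to 0$. So the idea is sound but the computation must be redone (and is unnecessary given the cited embedding). A second, minor soft spot: you justify $c_k\to+\infty$ by asserting that sublevel sets of $\Phi|_{\mathcal M}$ are relatively compact in $E_r$, which is neither clear nor needed; the standard genus-theory statement that $b_k\to\sup_{\mathcal M}J=+\infty$ when $\gamma(\mathcal M)=+\infty$ and (PS) holds (Theorem 10.10 of \cite{AMbook}) is what the paper uses.
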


The above result can be thought of as a strongly nonlinear
eigenvalue problem. Indeed, the value $\mu_1$ is given by a
minimization process, in some aspects analogous to the first
eigenvalue. But this is achieved only in the radial space $E_r$.

In the last section we study the decay of the solutions that we
have found.  For $p>2$ and assuming radial symmetry we show that
the solutions of \eqref{eq12} satisfy an exponential decay
estimate at infinity. The result is obtained through comparison
arguments. As a consequence the solutions obtained in Theorem
\ref{teo2} belong to $L^2(\R^3)$, which is desirable from the
point of view of applications. We point out that this estimate
does not follow from arguments like in \cite{afm, bonheure,
swwillem}; here different arguments are to be used.

\section{Preliminaries}

We begin by enumerating some properties of the space $E$ and the
problem \eqref{eq12} that will be of use throughout the paper.
Next proposition has been proved in \cite{preprint}:

\begin{proposition} \label{app} Let us define, for any $u\in E$,
$$ \| u \|_E = \left ( \int_{\R^3}
|\nabla u(x)|^2\, dx  + \left ( \int_{\R^3} \int_{\R^3}
\frac{u^2(x) u^2(y)}{|x-y|}\, dx \, dy \right ) ^{1/2} \right
)^{1/2}.$$ Then, $\| \cdot \|_E$ is a norm, and $(E, \| \cdot
\|_E)$ is a uniformly convex Banach space. Moreover,
$C_0^{\infty}(\R^3)$ is dense in $E$, and also
$C_{0,r}^{\infty}(\R^3)$ is dense in $E_r$.
\end{proposition}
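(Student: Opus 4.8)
The plan is to prove the four assertions in turn, with increasingly delicate tools. Throughout write $D(f,g)=\intr\intr\frac{f(x)g(y)}{|x-y|}\,dx\,dy$ for the Coulomb bilinear form and $D(u)=D(u^2,u^2)$. The structural fact underlying everything is that, by the Fourier representation $D(f,f)=c\intr|\widehat f(\xi)|^2|\xi|^{-2}\,d\xi$, the form $D(\cdot,\cdot)$ is symmetric, bilinear and positive definite on the space of measurable functions of finite Coulomb energy; I denote by $\mathcal C$ the associated Hilbert space and by $\langle\cdot,\cdot\rangle_{\mathcal C}$ its inner product. Then $\|u\|_E^2=\|\nabla u\|_{L^2}^2+\|u^2\|_{\mathcal C}$, and I will use freely the Cauchy--Schwarz inequality in $\mathcal C$, the positivity $\langle u^2,v^2\rangle_{\mathcal C}=D(u^2,v^2)\ge 0$, and the pointwise bound $u(x)v(x)u(y)v(y)\le\tfrac12\bigl(u^2(x)v^2(y)+u^2(y)v^2(x)\bigr)$, which upon integration against $|x-y|^{-1}$ and symmetrization gives $\|uv\|_{\mathcal C}^2=D(uv,uv)\le D(u^2,v^2)\le\|u^2\|_{\mathcal C}\|v^2\|_{\mathcal C}$.

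\emph{Norm axioms.} Homogeneity is immediate since $D(tu)=t^4D(u)$. For the triangle inequality I would first check that $u\mapsto D(u)^{1/4}=\|u^2\|_{\mathcal C}^{1/2}$ is a norm: expanding $(u+v)^2=u^2+2uv+v^2$, the triangle inequality in $\mathcal C$ together with the estimate $\|uv\|_{\mathcal C}\le\|u^2\|_{\mathcal C}^{1/2}\|v^2\|_{\mathcal C}^{1/2}$ just recorded gives $\|(u+v)^2\|_{\mathcal C}\le\bigl(\|u^2\|_{\mathcal C}^{1/2}+\|v^2\|_{\mathcal C}^{1/2}\bigr)^2$, which is the claim. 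Since $\|\nabla\cdot\|_{L^2}$ (a norm on $D^{1,2}(\R^3)$, by $D^{1,2}\hookrightarrow L^6$) and $D(\cdot)^{1/4}$ are norms, their $\ell^2$-combination $\|\cdot\|_E$ is a norm as well, the $\ell^2$-norm on $\R^2$ being monotone on the positive quadrant and subadditive. Definiteness comes from the $D^{1,2}$-term.

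\emph{Completeness and uniform convexity.} Completeness: for a Cauchy sequence $(u_n)$, the gradients converge in $L^2$, so $u_n\to u$ in $D^{1,2}$ and, along a subsequence, a.e.; applying Fatou's lemma to the nonnegative double integral shows $u\in E$ and that $D(u_n-u_m)\to 0$ forces $D(u_n-u)\to 0$. Uniform convexity is the heart of the matter, and I would use the sequential criterion: given $\|u_n\|_E=\|v_n\|_E=1$ and $\|u_n+v_n\|_E\to 2$, split the norms as $p_i^2+q_i^2=1$ (with $p_i$ the $D^{1,2}$- and $q_i$ the Coulomb-component); from $\|u_n+v_n\|_E^2\le(p_1+p_2)^2+(q_1+q_2)^2\le 4$ one extracts (on a subsequence) $p_1,p_2\to p$, $q_1,q_2\to q$ with $p^2+q^2=1$, and $\|\nabla(u_n+v_n)\|_{L^2}-(p_1+p_2)\to 0$, $\|(u_n+v_n)^2\|_{\mathcal C}-(q_1+q_2)^2\to 0$. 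The first is the uniform-convexity trigger in the Hilbert space $D^{1,2}(\R^3)$ and yields $\|\nabla(u_n-v_n)\|_{L^2}\to 0$. For the Coulomb part I write $(u_n\pm v_n)^2=P_n\pm 2Q_n$, $P_n=u_n^2+v_n^2$, $Q_n=u_nv_n$, so $\|(u_n\pm v_n)^2\|_{\mathcal C}^2=\|P_n\|_{\mathcal C}^2\pm4\langle P_n,Q_n\rangle_{\mathcal C}+4\|Q_n\|_{\mathcal C}^2$; combining $\langle u_n^2,v_n^2\rangle_{\mathcal C}\ge0$, $\|Q_n\|_{\mathcal C}^2\le\langle u_n^2,v_n^2\rangle_{\mathcal C}\le q_1^2q_2^2$, and Cauchy--Schwarz, the second convergence above is an ``almost equality'' case that successively forces $\|P_n\|_{\mathcal C}\to q_1^2+q_2^2$, $\|Q_n\|_{\mathcal C}\to q_1q_2$, $\langle P_n,Q_n\rangle_{\mathcal C}\to(q_1^2+q_2^2)q_1q_2$, whence $\|(u_n-v_n)^2\|_{\mathcal C}^2\to\bigl((q_1^2+q_2^2)-2q_1q_2\bigr)^2=(q_1-q_2)^4\to 0$; the degenerate cases $p=0$ or $q=0$ are disposed of directly by the triangle inequality. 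Together this gives $\|u_n-v_n\|_E\to 0$.

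\emph{Density.} Given $u\in E$, multiply by a radial smooth cutoff $\eta(\cdot/R)$: the $D^{1,2}$-error from $u\,\nabla(\eta(\cdot/R))$ is bounded by $C\|u\|_{L^6(\R^3\setminus B_R)}$ via H\"older, and the Coulomb error tends to $0$ by dominated convergence in the double integral, whose integrand is dominated by $\frac{u^2(x)u^2(y)}{|x-y|}\in L^1$. This reduces to $u\in E$ with compact support, hence $u\in H^1(\R^3)$; mollifying with a radial mollifier $\phi_\d$ gives $u*\phi_\d\to u$ in $H^1$, hence in $D^{1,2}$ and in $L^6$, and since the supports stay in a fixed compact set, also in $L^{12/5}$, so the Hardy--Littlewood--Sobolev bound $D(w)\le C\|w\|_{L^{12/5}}^4$ yields convergence in the Coulomb norm as well. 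Choosing all cutoffs and mollifiers radial handles $E_r$ simultaneously. The step I expect to be genuinely delicate is the uniform convexity: both summands of $\|\cdot\|_E^2$ must be squeezed at once, and the quartic map $u\mapsto u^2$ has to be propagated through the inner product of $\mathcal C$ in an equality-case analysis; the remaining steps are essentially routine.
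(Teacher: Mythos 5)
The paper does not actually prove this proposition: it is imported verbatim from \cite{preprint} with the remark ``Next proposition has been proved in \cite{preprint}'', so there is no in-paper argument to compare yours against. On its own terms your proof is correct and essentially complete, and it follows the natural route one would expect to find in the cited reference: treat the Coulomb energy as a positive definite bilinear form $D(\cdot,\cdot)$, deduce that $u\mapsto D(u^2,u^2)^{1/4}$ is a norm from the expansion of $(u+v)^2$ together with $D(uv,uv)\le D(u^2,v^2)\le D(u^2,u^2)^{1/2}D(v^2,v^2)^{1/2}$, and then combine it with the $D^{1,2}$ seminorm via the $\ell^2$-sum. The completeness argument (Fatou along an a.e.\ convergent subsequence) and the density argument (cut-off controlled by $\|u\|_{L^6(\R^3\setminus B_R)}$ and by dominated convergence in the double integral, then mollification on a fixed compact support with Hardy--Littlewood--Sobolev for the Coulomb error, everything chosen radial when needed) are both sound. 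The uniform convexity cascade is the genuinely delicate step and your treatment of it works: the three terms $\|P_n\|^2$, $4\langle P_n,Q_n\rangle$, $4\|Q_n\|^2$ are each bounded above by quantities summing to $(q_1+q_2)^4$, so tightness of the total forces each difference to vanish, and the sign flip in $(u_n-v_n)^2=P_n-2Q_n$ then yields $(q_1-q_2)^4\to 0$.

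Two small points of precision. First, calling $\mathcal C$ ``the associated Hilbert space'' overstates what you use and what is true without further work (completeness of the Coulomb-energy space is not needed anywhere); what your argument actually requires is that $D$ is a symmetric, positive definite bilinear form on the span of the functions $u^2$, $v^2$, $uv$ with all cross integrals absolutely convergent, and your pointwise bound $|u(x)v(x)u(y)v(y)|\le\tfrac12\bigl(u^2(x)v^2(y)+u^2(y)v^2(x)\bigr)$ is exactly what guarantees this, so the argument is safe --- but it would be cleaner to say so rather than invoke a Hilbert space. Second, in the equality-case analysis the quantities $p_i,q_i$ depend on $n$, so the statements ``$\|Q_n\|_{\mathcal C}\to q_1q_2$'' etc.\ should be read as differences tending to zero; as written this is a notational slip, not a mathematical one.
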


Let us define $\phi_u = \frac{1}{4 \pi |x|} \star u^2$; then, $u
\in E$ if and only if both $u$ and $\phi_u$ belong to
$D^{1,2}(\R^3)$. In such case, problem \eqref{eq12} can be
rewritten as a system in the following form:
\begin{equation} \label{eq21} \left\{ \begin{array}{l}
  -\D u +  \phi u= \mu u^p \\
  -\D \phi = u^2.
  \end{array}\right.\end{equation}

Moreover,
$$ \int_{\R^3} |\nabla \phi_u(x)|^2\, dx = \int_{\R^3} \phi_u(x) u(x)^2\, dx = \int_{\R^3}
\int_{\R^3} \frac{u^2(x) u^2(y)}{4 \pi |x-y|}\, dx \, dy.$$

By multiplying (a priori, formally, but it can be made rigorous by
truncating and using cut-off functions) the second equation in
\eqref{eq21} by $|u|$ and integrating, we obtain:
$$\intr |u|^3 = \intr (-\D \phi) |u| = \intr \langle \nabla \phi , \nabla
|u| \rangle . $$

We easily deduce the following inequality, that will be used may
times in what follows
\begin{equation} \label{lions} \intr |u|^3\leq \frac 1 2 \intr \left (|\nabla u|^2 +  |\nabla \phi|^2 \right).\end{equation}

By the above inequality and Sobolev inequality we conclude that $E
\subset L^q(\R^3)$ for any $q \in [3,6]$. Indeed, this range is
optimal and the embedding is continuous, see \cite{preprint}. As a
consequence, the functional $I_{\mu}: E \to \R$,
\begin{equation} \label{functional2} I_{\mu}(u)= \frac 1 2
\int_{\R^3} |\nabla u|^2 \, dx + \frac{1}{4} \intr \intr
\frac{u^2(x) u^2(y)}{|x-y|}\, dx \, dy - \frac{\mu}{p+1} \intr
|u|^{p+1}\,dx, \end{equation} is well-defined and $C^1$ for $p \in
[2,5]$.

In \cite{preprint} the following characterizations of the
convergences in $E$ is given:

\begin{lemma} \label{weak} Given a sequence $\{u_n\}$ in $E$, $u_n \to u$ in $E$ if and only if
$u_n \to u$ and $\phi_{u_n} \to \phi_u$ in $D^{1,2}(\R^3)$.

\noindent Moreover, $u_n \rightharpoonup u$ in $E$ if and only if
$u_n \rightharpoonup u$ in $D^{1,2}(\mathbb R^3)$ and $\int_{\R^3} \int_{\R^3}
\frac{u_n^2(x) u_n^2(y)}{ |x-y|}\, dx \, dy$ is bounded. In
such case, $\phi_{u_n} \rightharpoonup \phi_u$ in $D^{1,2}(\mathbb R^3)$.
\end{lemma}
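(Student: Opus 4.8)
The plan is to reduce everything to the Newtonian potentials and exploit the geometry of $E$. Recall from the identity displayed above that $\int_{\R^3}|\nabla\phi_u|^2 = \iint \frac{u^2(x)u^2(y)}{4\pi|x-y|}\,dx\,dy$, so that $\|u\|_E^2 = \int_{\R^3}|\nabla u|^2 + (4\pi)^{1/2}\|\nabla\phi_u\|_{L^2}$. For a (possibly signed) function $v$ with finite Coulomb self-energy set $D(v,v)=\iint\frac{v(x)v(y)}{|x-y|}\,dx\,dy = 4\pi\,\|\nabla(\tfrac{1}{4\pi|x|}\star v)\|_{L^2}^2\ge 0$; this is a symmetric positive semi-definite bilinear form, so $v\mapsto D(v,v)^{1/2}$ obeys the triangle inequality, and Cauchy--Schwarz for the positive measure $\tfrac{dx\,dy}{|x-y|}$ gives $D(fg,fg)\le D(f^2,f^2)^{1/2}D(g^2,g^2)^{1/2}$. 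These are the only elementary ingredients.

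For the ``only if'' part of the first statement, if $u_n\to u$ in $E$ then, by the very definition of the norm, $\nabla u_n\to\nabla u$ in $L^2$ (hence $u_n\to u$ in $D^{1,2}(\R^3)$) and $D((u_n-u)^2,(u_n-u)^2)\to 0$. Writing $w_n=u_n-u$ and $u_n^2-u^2=w_n^2+2uw_n$, the triangle inequality for $D(\cdot,\cdot)^{1/2}$ followed by the Cauchy--Schwarz bound yields $D(u_n^2-u^2,u_n^2-u^2)^{1/2}\le D(w_n^2,w_n^2)^{1/2}+2\,D(u^2,u^2)^{1/4}D(w_n^2,w_n^2)^{1/4}\to 0$, which is exactly $\|\nabla(\phi_{u_n}-\phi_u)\|_{L^2}\to 0$, since $\phi_{u_n}-\phi_u=\tfrac{1}{4\pi|x|}\star(u_n^2-u^2)$.

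The ``if'' part of the first statement is the crux, and I expect it to be the main obstacle: a direct expansion of $D((u_n-u)^2,(u_n-u)^2)$ does not close, because the cross term $D(u(u_n-u),u(u_n-u))$ cannot be shown to vanish from these estimates alone. Instead I would argue as follows. If $u_n\to u$ and $\phi_{u_n}\to\phi_u$ in $D^{1,2}(\R^3)$, then $\{u_n\}$ is bounded in $E$ (both $\|\nabla u_n\|_{L^2}$ and the Coulomb energy $4\pi\|\nabla\phi_{u_n}\|_{L^2}^2$ converge); since $(E,\|\cdot\|_E)$ is uniformly convex by Proposition~\ref{app}, it is reflexive, so every weakly convergent subsequence $u_{n_k}\rightharpoonup v$ in $E$ also converges weakly to $v$ in $D^{1,2}(\R^3)$ (the embedding being linear and continuous), forcing $v=u$, and hence $u_n\rightharpoonup u$ in $E$ by the subsequence principle. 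Moreover $\|u_n\|_E^2=\|\nabla u_n\|_{L^2}^2+(4\pi)^{1/2}\|\nabla\phi_{u_n}\|_{L^2}\to\|u\|_E^2$. Weak convergence together with convergence of the norms in a uniformly convex space implies strong convergence (the Radon--Riesz property: when $u\neq0$, testing uniform convexity on $\tfrac12\big(u_n/\|u_n\|_E+u/\|u\|_E\big)$ forces $\|u_n-u\|_E\to0$; the case $u=0$ is trivial). This completes the first equivalence.

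For the second statement, ``$u_n\rightharpoonup u$ in $E$'' immediately gives $u_n\rightharpoonup u$ in $D^{1,2}(\R^3)$ and boundedness of the Coulomb energies, via the continuous linear embedding $E\hookrightarrow D^{1,2}(\R^3)$ and the boundedness of weakly convergent sequences. Conversely, these two conditions make $\{u_n\}$ bounded in $E$, and reflexivity of $E$ together with uniqueness of the $D^{1,2}(\R^3)$-weak limit gives $u_n\rightharpoonup u$ in $E$ (in particular $u\in E$). Finally, to identify $\phi_{u_n}\rightharpoonup\phi_u$ in $D^{1,2}(\R^3)$: the $\phi_{u_n}$ are bounded in the Hilbert space $D^{1,2}(\R^3)$, so a subsequence converges weakly to some $\Phi$; testing $-\Delta\phi_{u_n}=u_n^2$ against $\psi\in C_0^\infty(\R^3)$ and using that $u_n\rightharpoonup u$ in $D^{1,2}(\R^3)$ yields $u_n\to u$ in $L^2_{loc}$ (Rellich), hence $u_n^2\to u^2$ in $L^1_{loc}$, one passes to the limit to obtain $\int_{\R^3}\nabla\Phi\cdot\nabla\psi=\int_{\R^3}u^2\psi=\int_{\R^3}\nabla\phi_u\cdot\nabla\psi$ for all such $\psi$; density of $C_0^\infty(\R^3)$ in $D^{1,2}(\R^3)$ gives $\Phi=\phi_u$, and the subsequence principle gives the full convergence.
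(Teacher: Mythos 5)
The paper gives no proof of this lemma --- it is quoted verbatim from \cite{preprint} --- so there is no in-paper argument to compare yours against; judged on its own, your proof is correct. The ``only if'' half of the first claim and the whole of the second claim are handled with exactly the tools the paper itself relies on elsewhere: the identity $\|u\|_E^2=\|\nabla u\|_{L^2}^2+(4\pi)^{1/2}\|\nabla\phi_u\|_{L^2}$, positive-definiteness of the Coulomb form $D$ and its Cauchy--Schwarz/H\"older consequences (the same device as in Lemma \ref{tecnico}), and, for identifying the weak limit of $\phi_{u_n}$, the equation $-\Delta\phi_{u_n}=u_n^2$ tested against $C_0^\infty$ functions together with Rellich. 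You correctly isolate the one delicate point, namely the ``if'' half of the first claim: expanding $D\bigl((u_n-u)^2,(u_n-u)^2\bigr)$ leaves the cross term $D\bigl(u(u_n-u),u(u_n-u)\bigr)$, which the elementary inequalities only bound rather than annihilate. Your resolution --- boundedness in $E$, reflexivity via Milman--Pettis, identification of the weak limit through the continuous embedding into $D^{1,2}(\R^3)$, convergence of the norms, and the Radon--Riesz property of the uniformly convex space $(E,\|\cdot\|_E)$ furnished by Proposition \ref{app} --- is legitimate and clean. (A more hands-on alternative: the H\"older grouping gives $D(uw_n,uw_n)\le D(u^2,w_n^2)=4\pi\int\nabla\phi_u\cdot\nabla\phi_{w_n}$ with $w_n=u_n-u$, and since $w_n^2\to 0$ in $L^1_{loc}$ while $\phi_{w_n}$ is bounded in $D^{1,2}(\R^3)$, one gets $\phi_{w_n}\rightharpoonup 0$ and the cross term dies; but nothing in your route is wrong.) The standard facts you invoke without proof (Milman--Pettis, Radon--Riesz, positivity of the Coulomb kernel) are all classical, so I see no gap.
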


For the sake of brevity, let us define:
$$T: E^4 \to \R, \ T(u,v,w,z)= \int_{\mathbb R^3}\int_{\mathbb
R^3}\frac{u(x) v(x)w(y)z(y)}{4 \pi |x-y|}dxdy.$$ Clearly, $T$ is a
continuous map, linear in each variable. Throughout the paper we
will need the following technical result:

\begin{lemma} \label{tecnico} Assume that we have three weakly convergent sequences in $E$, $u_n\rightharpoonup
u$, $v_n\rightharpoonup v$, $w_n\rightharpoonup w$, and $z \in E$.
Then:
$$T(u_n,v_n,w_n,z) \to T(u,v,w,z).$$

\end{lemma}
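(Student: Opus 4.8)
The plan is to reduce, by density, to the case in which $z$ is compactly supported, and then to produce the convergence from the compactness of the Sobolev embedding on balls. Since $\{u_n\},\{v_n\},\{w_n\}$ are weakly convergent, they are bounded in $E$; since $T$ is continuous and linear in each variable, for every $z'\in E$ and all $n$
$$|T(u_n,v_n,w_n,z)-T(u_n,v_n,w_n,z')|=|T(u_n,v_n,w_n,z-z')|\le C\|z-z'\|_E,$$
with $C$ independent of $n$, and similarly with $(u,v,w)$ in place of $(u_n,v_n,w_n)$. By Proposition \ref{app}, $C_0^\infty(\R^3)$ is dense in $E$, so given $\e>0$ I would choose $z'\in C_0^\infty(\R^3)$ with $C\|z-z'\|_E<\e$; a three-term splitting then shows that, once the lemma is proved for $z'$, one has $\limsup_n|T(u_n,v_n,w_n,z)-T(u,v,w,z)|\le 2\e$, and letting $\e\to0$ finishes the argument. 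From now on, then, I assume $z\in C_0^\infty(\R^3)$ and set $K=\operatorname{supp}z$.

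For $f,g\in E$ put $\psi_{fg}=\frac1{4\pi|x|}\star(fg)$. Writing $fg=\frac14[(f+g)^2-(f-g)^2]$ and using $f\pm g\in E$, one gets $\psi_{fg}=\frac14(\phi_{f+g}-\phi_{f-g})\in D^{1,2}(\R^3)$ with $-\D\psi_{fg}=fg$; in particular $\{\psi_{u_nv_n}\}$ is bounded in $D^{1,2}(\R^3)$, since $\{u_n\pm v_n\}$ is bounded in $E$ and $\intr|\nabla\phi_h|^2\le\|h\|_E^4$. Fubini's theorem is applicable (the integral defining $T$ is absolutely convergent: using $|ab|\le\frac12(a^2+b^2)$ it is dominated by a finite sum of Coulomb energies, all finite for functions of $E$), and since $z$ is supported in $K$,
$$T(u_n,v_n,w_n,z)=\intr\psi_{u_nv_n}\,w_nz=\int_K\psi_{u_nv_n}\,w_nz .$$
Hence it is enough to establish (i) $\psi_{u_nv_n}\to\psi_{uv}$ in $L^3(K)$ and (ii) $w_nz\to wz$ in $L^{3/2}(K)$, for then, since $\frac13+\frac23=1$, Hölder's inequality yields $\int_K\psi_{u_nv_n}w_nz\to\int_K\psi_{uv}\,wz=T(u,v,w,z)$, which is the claim.

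Claim (ii) is easy: $w_n\rightharpoonup w$ in $E$ implies $w_n\rightharpoonup w$ in $D^{1,2}(\R^3)$ (Lemma \ref{weak}), hence $w_n\to w$ in $L^3(K)$ by Rellich's theorem on a ball containing $K$, so $\|(w_n-w)z\|_{L^{3/2}(K)}\le|K|^{1/3}\|z\|_\infty\|w_n-w\|_{L^3(K)}\to0$. For (i): we already know $\{\psi_{u_nv_n}\}$ is bounded in $D^{1,2}(\R^3)$, and for every $\varphi\in C_0^\infty(\R^3)$
$$\intr\nabla\psi_{u_nv_n}\cdot\nabla\varphi=\intr u_nv_n\,\varphi\ \longrightarrow\ \intr uv\,\varphi=\intr\nabla\psi_{uv}\cdot\nabla\varphi,$$
because $u_n\to u$ and $v_n\to v$ in $L^3_{\mathrm{loc}}(\R^3)$ (Lemma \ref{weak} and Rellich) and therefore $u_nv_n\to uv$ in $L^{3/2}_{\mathrm{loc}}(\R^3)$; so $\psi_{u_nv_n}\rightharpoonup\psi_{uv}$ in $D^{1,2}(\R^3)$ (the weak limit being determined on the dense subspace $C_0^\infty$, hence the same along every subsequence), and Rellich on a ball containing $K$ upgrades this to the strong convergence (i). The step I expect to be the real obstacle is the first reduction: the embedding $E\hookrightarrow L^q(\R^3)$ is available only for $q\in[3,6]$, which does not reach the exponent $6/5$ dual to the Coulomb nonlinearity, so a direct tail estimate for $\int_{|x|>R}$ of the double integral cannot be closed by Hölder's inequality and this embedding alone (heuristically, the kernel $|x-y|^{-1}$ has the same weak-$L^3$ size at every scale and provides no decay); cutting $z$ off to a compact set removes the tail, after which the compact embedding $D^{1,2}\hookrightarrow L^3$ on bounded sets — used both for $w_n$ and, crucially, for the potentials $\psi_{u_nv_n}$ — does the rest.
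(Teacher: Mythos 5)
Your argument is correct, but it follows a genuinely different route from the paper's. The paper never localizes: it freezes the variables one at a time (first $w_n=w$, then $u_n=u$, then the general case) and at each step applies the Cauchy--Schwarz inequality for the Coulomb form, $T(a,b,c,d)^2\le T(a,a,c,c)\,T(b,b,d,d)$, reducing everything to the single fact that $T(u_n-u,u_n-u,w,w)=\int_{\R^3}\nabla\phi_{u_n-u}\cdot\nabla\phi_w\to 0$, which is exactly the weak convergence $\phi_{u_n-u}\rightharpoonup 0$ in $D^{1,2}(\R^3)$ provided by Lemma \ref{weak}. You instead first reduce to $z\in C_0^\infty(\R^3)$ via the density statement of Proposition \ref{app} and the uniform multilinear bound on $T$, then polarize to get the mixed potentials $\psi_{u_nv_n}$ bounded in $D^{1,2}(\R^3)$, and finish with Rellich compactness on a ball for both $w_n$ and $\psi_{u_nv_n}$. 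All the steps check out: the multilinear bound $|T(a,b,c,d)|\le C\|a\|_E\|b\|_E\|c\|_E\|d\|_E$ does hold (iterate the Coulomb Cauchy--Schwarz and use the definition of $\|\cdot\|_E$), weak convergence gives boundedness since $E$ is reflexive, the Fubini justification is sound, and the weak limit of $\psi_{u_nv_n}$ is correctly identified through the dense class of test functions. The paper's proof is shorter and stays entirely in the energy space, using only the quadratic structure of $T$ and Lemma \ref{weak}; yours trades that for standard local compactness plus a density reduction, which is arguably more transparent but leans on two extra ingredients (density of $C_0^\infty$ in $E$ and the Rellich theorem) that the paper's proof does not need. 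Note also that your density step still secretly uses the same Coulomb Cauchy--Schwarz inequality, hidden inside the continuity of $T$, so the underlying mechanism is not entirely avoided.
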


\begin{proof}

Observe that if two of the above sequences are constantly equal to
their respective limits, the conclusion holds immediately (we have
a continuous linear map applied to a weakly convergent sequence).

\medskip

{\bf Step 1} Suppose that $w_n=w$ for all $n \in \N$. Then:
$$ T(u_n,v_n, w,z)= T(u_n-u,v_n,w,z) + T(u,v_n,w,z).$$ By the above discussion, the
second right term converges to $T(u,v,w,z)$. Moreover, by using
Holder to the functions $(u_n(x)-u(x))w(y)$ and $v_n(x)z(y)$, we
have:
$$T(u_n-u,v_n,w,z)^2 \leq T(u_n-u,u_n-u,w,w) \, T(v_n,v_n,z,z).$$
The second term on the right being uniformly bounded, let us show
that the first term converges to $0$. Observe now that:
$$ T(u_n-u,u_n-u,w,w)= \intr \nabla \phi_{(u_n-u)} \cdot \nabla \phi_{w},$$
following the notation $\phi_u = \frac{1}{4 \pi |x|} \star u^2$.

Lemma \ref{weak} implies that $\phi_{(u_n-u)} \rightharpoonup 0$
in $D^{1,2}(\mathbb R^3)$, and this concludes the proof of Step 1.

\medskip

{\bf Step 2} Assume now that $u_n=u$ for all $n \in \N$. Then:
$$ T(u,v_n, w_n,z)= T(u,v_n-v,w_n,z) + T(u,v,w_n,z).$$
As above, the second right term converges to $T(u,v,w,z)$. We now
use Holder estimate to the functions $u(x) w_n(y)$ and
$(v_n(x)-v(x))z(y)$, to conclude:
$$ T(u,v,w_n,z)^2 \leq T(u,u, w_n, w_n)\,  T(v_n-v, v_n-v, z,z). $$
Observe now that the first right term is uniformly bounded and the
second converges to $0$ by the first step.

\medskip

{\bf Step 3 } Finally, we consider the general case.
$$ T(u_n,v_n, w_n,z)= T(u_n-u,v_n,w_n,z) + T(u,v_n,w_n,z).$$
By the second step, the second right term converges to
$T(u,v,w,z)$. With respect to the first term, we apply Holder
estimate to the functions $(u_n(x)-u(x))z(y)$ and $v_n(x) w_n(y)$:
$$ T(u_n-u,v_n,w_n,z)^2 \leq T(u_n-u,u_n-u, z, z) \, T(v_n, v_n, w_n,w_n). $$
The second right term is bounded and the first term converges to
zero thanks to Step 1.

\end{proof}

We also state here, for convenience of the reader, an adaptation
to the space $E$ of a result due to P.-L. Lions, see Lemma I.1 of
\cite{lions}:

\begin{lemma} \label{lemalions} Let $\{u_n\}$ a bounded sequence
in $E$, $q \in [3,6)$, and assume that
$$ \sup_{y \in \R^3} \int_{B(y,R)}|u_n|^q \to 0 \ \mbox{ for some } R>0. $$
Then $u_n \to 0$ in $L^{\alpha}(\R^3)$ for any $\alpha \in (3,6)$.
\end{lemma}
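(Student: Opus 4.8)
The plan is to adapt the proof of Lemma I.1 in \cite{lions} to the space $E$, the only genuinely new point being the absence of a global $L^2(\R^3)$ bound (the ``zero mass'' feature). I would begin by reducing the statement. On a fixed ball $B(y,R)$ one has $L^q(B(y,R)) \hookrightarrow L^3(B(y,R))$ since $q\ge 3$, so $\int_{B(y,R)}|u_n|^3 \le C_R \big(\int_{B(y,R)}|u_n|^q\big)^{3/q}$ and therefore the hypothesis for \emph{some} $q\in[3,6)$ already forces $\e_n:=\sup_{y\in\R^3}\int_{B(y,R)}|u_n|^3 \to 0$; hence we may assume $q=3$. Moreover, since $E\hookrightarrow L^3(\R^3)$ and $E\hookrightarrow L^6(\R^3)$ continuously, $\{u_n\}$ is bounded in $L^3(\R^3)$ and in $L^6(\R^3)$, so it suffices to prove $u_n\to 0$ in $L^4(\R^3)$: the full conclusion for $\a\in(3,6)$ then follows by interpolating $\|u_n\|_{L^\a}$ between $\|u_n\|_{L^3}$ and $\|u_n\|_{L^4}$ when $\a\le 4$, and between $\|u_n\|_{L^4}$ and $\|u_n\|_{L^6}$ when $\a\ge 4$.

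Next I would fix a covering $\{B(y_i,R)\}_{i\in\N}$ of $\R^3$ with bounded overlap, i.e. $\sum_i \mathbf 1_{B(y_i,R)} \le \kappa$ pointwise with $\kappa$ depending only on the dimension, and estimate $\int_{B(y_i,R)}|u_n|^4$ on each ball. H\"older interpolation gives $\|u_n\|_{L^4(B(y_i,R))}\le \|u_n\|_{L^3(B(y_i,R))}^{1/2}\|u_n\|_{L^6(B(y_i,R))}^{1/2}$. On the other hand, $u_n\in W^{1,2}(B(y_i,R))$ (as $\nabla u_n\in L^2(\R^3)$ and $u_n\in L^3(\R^3)\subset L^2(B(y_i,R))$), and the Sobolev embedding $W^{1,2}(B(y_i,R))\hookrightarrow L^6(B(y_i,R))$, together with $\|u_n\|_{L^2(B(y_i,R))}\le C_R\|u_n\|_{L^3(B(y_i,R))}$, yields $\|u_n\|_{L^6(B(y_i,R))}\le C_R\big(\|\nabla u_n\|_{L^2(B(y_i,R))}+\|u_n\|_{L^3(B(y_i,R))}\big)$, with $C_R$ independent of $i$ by translation invariance. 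Combining the two estimates, $\int_{B(y_i,R)}|u_n|^4\le C\,\|u_n\|_{L^3(B(y_i,R))}^2\,\|\nabla u_n\|_{L^2(B(y_i,R))}^2 + C\,\|u_n\|_{L^3(B(y_i,R))}^4$.

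Then I would sum over $i$. In the first term I use $\|u_n\|_{L^3(B(y_i,R))}^2 \le \e_n^{2/3}$ for every $i$, together with $\sum_i \|\nabla u_n\|_{L^2(B(y_i,R))}^2 \le \kappa \intr |\nabla u_n|^2 \le C$ (bounded overlap plus $\{u_n\}$ bounded in $E$). In the second term I write $\|u_n\|_{L^3(B(y_i,R))}^4 = \big(\int_{B(y_i,R)}|u_n|^3\big)^{4/3} \le \e_n^{1/3}\int_{B(y_i,R)}|u_n|^3$ and use $\sum_i \int_{B(y_i,R)}|u_n|^3 \le \kappa \intr |u_n|^3 \le C$. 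Since $\{B(y_i,R)\}$ covers $\R^3$, this gives $\intr |u_n|^4 \le \sum_i \int_{B(y_i,R)}|u_n|^4 \le C(\e_n^{2/3}+\e_n^{1/3}) \to 0$, and the reduction above completes the proof.

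The step I expect to be the main obstacle is precisely the one that dictates this detour: in the classical $H^1$ statement one sums the local $H^1$-norms directly, whereas here there is no global $L^2(\R^3)$ control of $u_n$, and the naive substitute $\sum_i\|u_n\|_{L^2(B(y_i,R))}^2 \le C\sum_i\big(\int_{B(y_i,R)}|u_n|^3\big)^{2/3}$ need not be finite because the exponent $2/3$ is below $1$. Choosing the target exponent $4$ is exactly what repairs this: then the gradient enters the local estimate to the power $2$ (so it sums over the cover without loss), and the leftover $L^3$-factor is either absorbed into $\e_n$ or carries an exponent $\ge 1$, so every term is summable after extracting a positive power of $\e_n$. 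Once $u_n\to 0$ in $L^4(\R^3)$ is established, the passage to all $\a\in(3,6)$ is a soft interpolation with the uniform $L^3$ and $L^6$ bounds.
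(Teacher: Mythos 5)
Your proof is correct, but it takes a more self-contained route than the paper. The paper's own proof is two lines: it invokes Lemma I.1 of Lions (the concentration-compactness paper) with gradient exponent $p=2$, which under the hypotheses ($\{u_n\}$ bounded in $L^q$, $\{\nabla u_n\}$ bounded in $L^2$, local $L^q$ mass vanishing uniformly) directly gives $u_n\to 0$ in $L^{\a}(\R^3)$ for $\a\in(q,6)$; the remaining range $\a\in(3,q]$ is then covered by interpolating with the uniform $L^3(\R^3)$ bound coming from $E\hookrightarrow L^3(\R^3)$. You instead reprove the Lions-type statement from scratch: reduce to $q=3$, establish decay in $L^4$ by a bounded-overlap covering combined with the local interpolation $\|u_n\|_{L^4(B)}\le\|u_n\|_{L^3(B)}^{1/2}\|u_n\|_{L^6(B)}^{1/2}$ and the local Sobolev embedding, and then interpolate outward using the global $L^3$ and $L^6$ bounds. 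All steps check out (the exponent bookkeeping in the summation over the cover is right, and the choice of target exponent $4$ does make every local factor either summable or absorbable into $\e_n$). One remark: the obstacle you flag --- the absence of a global $L^2(\R^3)$ bound --- is not actually an obstruction to citing Lions directly, since his Lemma I.1 is stated for a general integrability exponent $q$ (here any $q\in[3,6)$ works) and never requires $q=2$; so the only genuinely new ingredient needed beyond the citation is the final interpolation with the $L^3$ bound, which is exactly what the paper does. Your version buys independence from the reference at the cost of length; the paper's buys brevity at the cost of sending the reader to Lions.
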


\begin{proof} By applying Lemma I.1 of \cite{lions} with $p=2$, we
obtain that $u_n \to 0$ in $L^{\alpha}(\mathbb R^3)$ for any $\alpha
\in (q,6)$. Recall now that $u_n$ is bounded in $E$, and hence in
$L^3(\mathbb R^3)$. We conclude by interpolation.
\end{proof}

To end up the section, we give a "Pohozaev-type" identity. This
identity is very close to the one given in \cite{mugnai2} for the
non-static case (that is, equation \eqref{eq11} with $\omega \neq
0$). The proof is exactly the same in this case and will be
skipped.

\begin{proposition} \label{poho} Let $p >0$ and $u \in E \cap H^2_{loc}(\mathbb R^3)$ be a weak solution of
\eqref{eq12}. Then:
\begin{equation} \label{eqpoho} \frac 1 2 \intr |\nabla u|^2 + \frac 5 4 \intr \intr
\frac{u^2(x) u^2(y)}{|x-y|}\, dx \, dy - \frac{3\mu}{p+1} \intr
|u|^{p+1}=0.\end{equation}

\end{proposition}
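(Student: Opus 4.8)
The plan is to derive the Pohozaev identity \eqref{eqpoho} by the standard scaling/multiplier argument, taking care that the low regularity of $u \in E \cap H^2_{loc}(\R^3)$ is handled by a truncation in space. First I would recall that a weak solution of \eqref{eq12} is equivalently a solution of the system \eqref{eq21}, with $\phi = \phi_u \in D^{1,2}(\R^3)$. The formal idea is to multiply the equation $-\Delta u + \phi u = \mu |u|^{p-1}u$ by $x \cdot \nabla u$ and integrate over $\R^3$. The three contributions, after integration by parts, give respectively $\frac{1}{2}\intr |\nabla u|^2$ (from the Laplacian term), $-\frac{1}{2}\intr \phi u^2$ plus a term involving $x\cdot\nabla\phi$ (from the Coulomb term), and $-\frac{3\mu}{p+1}\intr|u|^{p+1}$ (from the nonlinearity). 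To treat the nonlocal term one also uses the equation $-\Delta\phi = u^2$ and the explicit homogeneity of the Newtonian kernel: a parallel computation multiplying $-\Delta \phi = u^2$ by $x\cdot\nabla\phi$ lets one eliminate the $x\cdot\nabla\phi$ piece and express everything in terms of $T(u,u,u,u) = \intr\intr \frac{u^2(x)u^2(y)}{4\pi|x-y|}\,dx\,dy$; the kernel's $-1$-homogeneity is exactly what produces the coefficient $\frac{5}{4}$ in front of the Coulomb energy.

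Since the computation is ``exactly the same'' as in \cite{mugnai2} modulo the absence of the $\omega u$ term (which in \eqref{eq11} would have contributed a term $\frac{3\omega}{2}\intr u^2$, here absent because $\omega = 0$), I would simply point to that reference. Concretely I would write: ``The proof follows verbatim the argument of \cite[Proposition ...]{mugnai2}; one only needs to drop the terms coming from $\omega u$, which do not appear here since $\omega = 0$.'' For completeness I might add one sentence indicating that the regularity hypothesis $u \in H^2_{loc}(\R^3)$ is what makes the multiplier $x \cdot \nabla u$ admissible locally, and that the decay of $u$ and $\phi_u$ (both in $D^{1,2}(\R^3)$, hence with the relevant boundary integrals over $\partial B_R$ vanishing along a sequence $R_n \to \infty$) kills the boundary terms.

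The main obstacle — and the reason the authors defer to \cite{mugnai2} — is precisely the justification of the integrations by parts and the vanishing of the boundary terms at infinity with only $E \cap H^2_{loc}$ regularity rather than fast decay: one must multiply by a cut-off $\psi(x/R)$, integrate by parts rigorously, and then show that the extra terms produced by differentiating the cut-off vanish as $R \to \infty$. For the gradient term this uses $\nabla u \in L^2$; for the nonlocal term one uses that $\phi_u \in D^{1,2}(\R^3) \cap L^6(\R^3)$ and that $\nabla \phi_u(x) = O(|x|^{-2})$ at infinity (since $u^2 \in L^{3/2}$ and the kernel gradient is $O(|x|^{-2})$), so that the products decay fast enough to be integrable and the cut-off error terms go to zero along a suitable sequence of radii. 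Since this is routine and identical to the cited paper, I would state the result and refer the reader there, as the authors do.
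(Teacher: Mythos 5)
Your proposal matches the paper's treatment: the authors likewise give no computation and simply state that the proof is identical to the Pohozaev identity of \cite{mugnai2} for the case $\omega\neq 0$, with the $\omega$-terms dropped. Your sketch of the underlying multiplier argument (pairing with $x\cdot\nabla u$, using $-\Delta\phi_u=u^2$ and the $-1$-homogeneity of the kernel to produce the coefficient $\tfrac{5}{4}$, and cutting off to justify the integrations by parts) is the standard and correct route, so there is nothing to object to.
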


In particular, we have the following non-existence result, also
very close to that of \cite{mugnai2}:

\begin{corollary}\label{corollnonexistence} For $p \geq 5$, there is no solution $u \in E
\cap H^2_{loc}(\mathbb R^3)$ of problem \eqref{eq12}.
\end{corollary}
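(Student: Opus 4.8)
The plan is to derive a contradiction from the Pohozaev identity \eqref{eqpoho} combined with the natural variational identity that any weak solution satisfies (testing the equation against $u$ itself). First I would observe that if $u \in E \cap H^2_{loc}(\R^3)$ is a weak solution of \eqref{eq12}, then multiplying \eqref{eq12} by $u$ and integrating (which is legitimate since $u \in E$ guarantees $\intr |\nabla u|^2$, $T(u,u,u,u)$ and $\intr |u|^{p+1}$ are all finite for $p \in [2,5]$, and more generally the three terms make sense) yields
\begin{equation} \label{eqnehari}
\intr |\nabla u|^2 + \intr \intr \frac{u^2(x) u^2(y)}{|x-y|}\, dx \, dy - \mu \intr |u|^{p+1} = 0.
\end{equation}

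Next I would treat \eqref{eqpoho} and \eqref{eqnehari} as two linear relations among the three nonnegative quantities
$$A := \intr |\nabla u|^2, \qquad B := \intr \intr \frac{u^2(x) u^2(y)}{|x-y|}\, dx \, dy, \qquad C := \mu \intr |u|^{p+1},$$
namely $\tfrac12 A + \tfrac54 B - \tfrac{3}{p+1} C = 0$ and $A + B - C = 0$. Eliminating one variable — for instance, using $C = A+B$ from \eqref{eqnehari} and substituting into \eqref{eqpoho} — gives
$$\left( \frac12 - \frac{3}{p+1} \right) A + \left( \frac54 - \frac{3}{p+1} \right) B = 0,$$
that is, $\frac{p-5}{2(p+1)} A + \frac{5p-7}{4(p+1)} B = 0$. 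For $p \geq 5$ the coefficient of $A$ is $\geq 0$ and the coefficient of $B$ is strictly positive, while $A, B \geq 0$; hence $B = 0$, which forces $u = 0$ (since $u \in D^{1,2}(\R^3)$ and $B=0$ implies $u^2 \star \frac{1}{|x|}$ vanishes, so $u \equiv 0$), and then $A = 0$ as well. Thus the only solution is trivial, proving the corollary.

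I do not anticipate a serious obstacle here, since Proposition \ref{poho} is granted and the argument is purely algebraic once \eqref{eqnehari} is in hand. The one point requiring a word of care is the justification of \eqref{eqnehari}: the pairing $\langle I_\mu'(u), u\rangle = 0$ is immediate when the functional is $C^1$ on $E$ (which holds for $p \in [2,5]$), but the corollary is stated for all $p \geq 5$, so for $p > 5$ I would instead justify testing against $u$ directly, using $u \in E \cap H^2_{loc}$ together with a cutoff/truncation argument exactly as was done earlier in the section for the inequality \eqref{lions}; the integrability of $|\nabla u|^2$ and of the Coulomb term comes from $u \in E$, and $\intr |u|^{p+1}$ is then finite as a consequence of the equation. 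Alternatively, and most cleanly, one notes that the borderline case $p = 5$ already gives $B = 0$ hence $u \equiv 0$, and for $p > 5$ one can reduce to knowing the result holds whenever a nontrivial solution would have finite energy, so it suffices to present the $p=5$ computation and remark that the coefficient of $A$ only improves for larger $p$. I would present the proof in the first form above, inserting a brief remark that \eqref{eqnehari} holds for weak solutions in $E \cap H^2_{loc}$.
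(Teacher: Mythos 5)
Your argument is correct and is precisely the proof the paper intends (the paper omits it, deriving the corollary from Proposition \ref{poho} combined with the identity $I_\mu'(u)(u)=0$ exactly as you do, following \cite{mugnai2}). Your added remark on justifying the Nehari-type identity \eqref{eqnehari} for $p>5$, where the embedding $E\subset L^{p+1}$ is no longer available, is a reasonable point of care that the paper glosses over.
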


\section{Ground states in the case $p>2$}
Along this section we consider $p \in (2,5)$.  As we mentioned in
the introduction, we will look for solutions of \eqref{eq12} as
critical points of the functional $I_{\mu}$ defined in
\eqref{functional2}.

Let us define $M:E \to \R$ as: $$M[u]:=\int_{\mathbb R^3}|\nabla
u|^2dx+\int_{\mathbb R^3}\int_{\mathbb
R^3}\frac{u^2(x)u^2(y)}{|x-y|}dxdy.$$

Just by taking into account the definitions of $M$ and $\| \cdot
\|_E$, we can easily check that for any $u\in E$
\begin{equation} \label{M} \frac{1}{2}\|u\|^4_E \leq M[u]\leq
\|u\|^2_{E},\ \mbox{ if either } \|u\|_{E}\leq 1 \mbox{ or }
M[u]\leq 1 .\end{equation} The following estimate will be of use:
\begin{lemma}\label{LemmaNormaM}There exists $C>0$ such that
$$\|u\|^{p+1}_{L^{p+1}(\mathbb R^3)}\leq C M[u]^{\frac{2p-1}{3}},\ \mbox{ for all }u\in E.$$
\end{lemma}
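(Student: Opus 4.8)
The plan is to interpolate the $L^{p+1}$ norm between $L^3$ and $L^6$, using inequality \eqref{lions} to control the $L^3$ norm by $M[u]$ and the Sobolev inequality to control the $L^6$ norm by $M[u]$. Note first that $\intr |\nabla \phi_u|^2 \le \intr\intr \frac{u^2(x)u^2(y)}{|x-y|}\,dx\,dy \le M[u]$, since $\intr\intr \frac{u^2(x)u^2(y)}{|x-y|}\,dx\,dy = 4\pi \intr |\nabla \phi_u|^2$ and $4\pi > 1$. Hence \eqref{lions} gives the scale-free bound $\|u\|_{L^3(\R^3)}^3 \le \tfrac12 M[u]$. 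On the other hand, the continuous embedding $D^{1,2}(\R^3) \hookrightarrow L^6(\R^3)$ yields $\|u\|_{L^6(\R^3)}^6 \le S^3 \left(\intr |\nabla u|^2\right)^3 \le S^3 M[u]^3$, where $S$ is the Sobolev constant.

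Next, since $p \in (2,5)$ we have $p+1 \in (3,6)$, so there is a unique $\theta \in (0,1)$ with $\frac{1}{p+1} = \frac{\theta}{3} + \frac{1-\theta}{6}$; solving gives $\theta = \frac{5-p}{p+1}$ and $1-\theta = \frac{2p-4}{p+1}$. Hölder's inequality then gives $\|u\|_{L^{p+1}(\R^3)} \le \|u\|_{L^3(\R^3)}^{\theta}\,\|u\|_{L^6(\R^3)}^{1-\theta}$, hence $\|u\|_{L^{p+1}(\R^3)}^{p+1} \le \|u\|_{L^3(\R^3)}^{\theta(p+1)}\,\|u\|_{L^6(\R^3)}^{(1-\theta)(p+1)}$. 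Substituting the two endpoint estimates above, the exponent of $M[u]$ becomes $\frac{\theta(p+1)}{3} + \frac{(1-\theta)(p+1)}{2} = \frac{5-p}{3} + (p-2) = \frac{2p-1}{3}$, while the multiplicative constant $C = \left(\tfrac12\right)^{\theta(p+1)/3} S^{(1-\theta)(p+1)/2}$ is finite and positive. This is precisely the claimed inequality, valid for every $u \in E$ with no smallness restriction, since both endpoint bounds are scale-free.

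I do not expect any real obstacle here: the only care needed is to keep track of the factor $4\pi$ relating the Coulomb double integral to $\intr |\nabla \phi_u|^2$ (so that \eqref{lions} can be read directly in terms of $M[u]$), and to verify that the interpolation exponents add up to exactly $\frac{2p-1}{3}$ for all $p \in (2,5)$, which the computation above confirms.
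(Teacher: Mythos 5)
Your proof is correct, and it takes a genuinely different route from the paper's. The paper does not interpolate directly: it uses the scaling $u_t(x)=t^2u(tx)$, for which $M[u_t]=t^3M[u]$ and $\int_{\R^3}|u_t|^{p+1}=t^{2p-1}\int_{\R^3}|u|^{p+1}$, chooses $t=M[u]^{-1/3}$ so that $M[u_t]=1$ (hence $\|u_t\|_E\leq 2^{1/4}$ by \eqref{M}), and then invokes the embedding $E\hookrightarrow L^{p+1}(\R^3)$ as a black box; the exponent $\frac{2p-1}{3}$ drops out of the normalization. You instead unpack that embedding into its two endpoints — $\|u\|_{L^3}^3\leq\frac12 M[u]$ from \eqref{lions} (after correctly accounting for the factor $4\pi$ between $\int_{\R^3}|\nabla\phi_u|^2$ and the Coulomb double integral) and $\|u\|_{L^6}^6\leq S^3M[u]^3$ from Sobolev — and obtain the exponent by computing $\theta=\frac{5-p}{p+1}$ explicitly; your arithmetic $\frac{5-p}{3}+(p-2)=\frac{2p-1}{3}$ checks out, and the argument even covers the endpoint cases $p=2$ and $p=5$. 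What your version buys is an explicit constant and self-containedness (both endpoint bounds are homogeneous of exactly the right degree, so no smallness or normalization is needed); what the paper's version buys is brevity and robustness — once any continuous embedding $E\hookrightarrow L^{p+1}$ is known, the scaling argument automatically produces the sharp power of $M[u]$ without tracking interpolation exponents, and it highlights the scaling $t^2u(tx)$ that is the organizing principle of the whole paper.
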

\begin{proof}
Let $u_t(x):=t^2u(tx),$ for $t\in \mathbb R^+.$ By the continuity of
the embedding $E\hookrightarrow L^{p+1}(\mathbb R^3)$, we have:
\begin{equation}\label{stimaNorma}\int_{\mathbb R^3}|u|^{p+1}dx=t^{1-2p}\int_{\mathbb
R^3}|u_t|^{p+1}\leq C t^{1-2p} \|u_t\|_E^{p+1}.
\end{equation}
We fix now an appropriate $t$. For this scope observe that
$M[u_t]= t^3M[u],$ so choosing $t:=M[u]^{-\frac{1}{3}},$ it
follows that $M[u_t]=1$ and by \eqref{M} we obtain that
$\|u_t\|_E\leq \sqrt[4] 2.$ The conclusion follows substituting
this value of $t$ in (\ref{stimaNorma}).
\end{proof}

As a first consequence, we obtain a lower bound on $M[u]$ for
the solutions of \eqref{eq12}:

\begin{corollary} \label{hola}
There exists $\eta
> 0$ such that $M[u] > \eta $ for any nontrivial solution $u$ of
\eqref{eq12}.
\end{corollary}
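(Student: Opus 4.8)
The plan is to combine the identity obtained by testing \eqref{eq12} against $u$ itself with the bound of Lemma~\ref{LemmaNormaM}. So let $u$ be a nontrivial solution of \eqref{eq12}. Multiplying the equation by $u$ and integrating over $\R^3$ gives
$$ \intr |\nabla u|^2\, dx + \intr \phi_u\, u^2\, dx = \mu \intr |u|^{p+1}\, dx , $$
and, recalling that $\intr \phi_u u^2\,dx = \frac{1}{4\pi}\intr\intr\frac{u^2(x)u^2(y)}{|x-y|}\,dx\,dy$, the left-hand side equals $\intr|\nabla u|^2\,dx + \frac{1}{4\pi}\intr\intr\frac{u^2(x)u^2(y)}{|x-y|}\,dx\,dy$. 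Since $\frac{1}{4\pi}<1$, this quantity is at least $\frac{1}{4\pi}M[u]$; hence
$$ \mu \intr |u|^{p+1}\,dx \ \ge\ \frac{1}{4\pi}\, M[u] . $$

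Next I would feed in Lemma~\ref{LemmaNormaM}, namely $\intr|u|^{p+1}\,dx \le C\, M[u]^{(2p-1)/3}$, to get $\frac{1}{4\pi}M[u]\le \mu C\, M[u]^{(2p-1)/3}$. Since $u$ is nontrivial, $M[u]>0$: indeed $M[u]=0$ would force $\nabla u\equiv 0$, and a function in $D^{1,2}(\R^3)$ with vanishing gradient is identically zero. We may therefore divide by $M[u]$ and rearrange to obtain
$$ M[u]^{\frac{2p-4}{3}} \ \ge\ \frac{1}{4\pi\mu C} . $$
As $p>2$, the exponent $\frac{2p-4}{3}$ is strictly positive, so $M[u] \ge (4\pi\mu C)^{-3/(2p-4)}=:\eta_0>0$; taking $\eta:=\eta_0/2$ yields the strict inequality in the statement.

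I do not expect any serious obstacle here. The two small points to address are that $u\in E$ makes testing \eqref{eq12} against $u$ legitimate — this is just the relation $I_\mu'(u)[u]=0$, so no $H^2_{\mathrm{loc}}$ regularity (as used for the Pohozaev identity in Proposition~\ref{poho}) is needed — and the harmless factor $4\pi$ relating the Coulomb integral appearing in $M[u]$ to the one produced by testing. Note that the hypothesis $p>2$ is used exactly once: to make the exponent $\frac{2p-4}{3}$ positive, which is precisely what converts $\frac{1}{4\pi}M[u]\le\mu C\, M[u]^{(2p-1)/3}$ into a genuine lower bound on $M[u]$ instead of a vacuous inequality.
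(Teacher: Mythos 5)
Your proof is correct and follows essentially the same route as the paper: test the equation against $u$ to relate $M[u]$ to $\mu\intr|u|^{p+1}$, insert Lemma~\ref{LemmaNormaM}, and use $p>2$ to turn the resulting self-bounding inequality into a positive lower bound on $M[u]$. The only (immaterial) differences are your careful bookkeeping of the $4\pi$ factor and that you divide by $M[u]$ directly rather than, as the paper does, first deducing a lower bound on $\|u\|_{L^{p+1}}$.
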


\begin{proof}
By multiplying equation \eqref{eq12} by $u$ and integrating, we
obtain that $M[u] = \int_{\R^3} |u|^{p+1}$. Combining this with
the previous lemma, we have:
$$ \|u\|^{p+1}_{L^{p+1}(\mathbb R^3)}\leq C M[u]^{\frac{2p-1}{3}} \leq C \|u\|^{\frac{(2p-1)(p+1)}{3}}_{L^{p+1}(\mathbb R^3)}. $$
Since $p>2$, we conclude.
\end{proof}

We now turn our attention to the functional $I_{\mu}$, and show that
it satisfies the geometric properties of the mountain-pass theorem.

\begin{proposition} $I_{\mu}$ has a proper local minimum at $0$ and is unbounded from below.
\end{proposition}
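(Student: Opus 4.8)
The plan is to prove the two assertions separately. For the local minimum I would use Lemma~\ref{LemmaNormaM} together with the hypothesis $p>2$; for the unboundedness I would exploit the rescaling $u_t(x)=t^2u(tx)$ that already appeared in the proof of that lemma.

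First, for the local minimum: since $\tfrac12\intr|\nabla u|^2\geq\tfrac14\intr|\nabla u|^2$, one has
\[ I_{\mu}(u)\ \geq\ \frac14\,M[u]-\frac{\mu}{p+1}\,\|u\|^{p+1}_{L^{p+1}(\R^3)}. \]
Invoking Lemma~\ref{LemmaNormaM} gives $I_{\mu}(u)\geq M[u]\bigl(\tfrac14-c\,M[u]^{2(p-2)/3}\bigr)$ for some constant $c=c(\mu,p)>0$, and the exponent $2(p-2)/3$ is strictly positive precisely because $p>2$. Hence there is $\rho_0>0$ with $I_{\mu}(u)\geq\tfrac18 M[u]$ whenever $M[u]\leq\rho_0$. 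I would then feed in \eqref{M}: if $\|u\|_E\leq r:=\min\{1,\sqrt{\rho_0}\}$ then $M[u]\leq\|u\|_E^2\leq\rho_0$, while $M[u]\geq\tfrac12\|u\|_E^4>0$ for $u\neq0$; therefore $I_{\mu}(u)\geq\tfrac1{16}\|u\|_E^4>0=I_{\mu}(0)$ for all $u$ with $0<\|u\|_E\leq r$. This gives a proper (strict) local minimum at $0$, and in particular $\inf_{\|u\|_E=r}I_{\mu}>0$, which is the mountain-pass geometry needed later.

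Next, for the unboundedness from below, I would fix any $u\in E\setminus\{0\}$ (for instance $u\in C_0^{\infty}(\R^3)$) and evaluate $I_{\mu}$ along the curve $u_t(x)=t^2u(tx)$, $t>0$. A change of variables yields $\intr|\nabla u_t|^2=t^3\intr|\nabla u|^2$, $M[u_t]=t^3M[u]$ and $\intr|u_t|^{p+1}=t^{2p-1}\intr|u|^{p+1}$, so that
\[ I_{\mu}(u_t)\ =\ t^3\Bigl(\frac12\intr|\nabla u|^2+\frac14\intr\intr\frac{u^2(x)u^2(y)}{|x-y|}\,dx\,dy\Bigr)-\frac{\mu\,t^{2p-1}}{p+1}\intr|u|^{p+1}\ =\ a\,t^3-b\,t^{2p-1}, \]
with $a,b>0$. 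Since $p>2$ we have $2p-1>3$, and therefore $I_{\mu}(u_t)\to-\infty$ as $t\to+\infty$.

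I do not expect a serious obstacle; the one point that needs care is the choice of rescaling in the second step. A plain dilation $u\mapsto su$ would not suffice, because for $p<3$ the quartic Coulomb term dominates as $s\to+\infty$ and forces $I_{\mu}(su)\to+\infty$. The anisotropic scaling $u_t(x)=t^2u(tx)$, under which $M$ is homogeneous of degree $3$ while the $L^{p+1}$ term acquires the larger exponent $2p-1$, is exactly what sends the energy to $-\infty$; it is also the scaling that leaves the leading part of $I_{\mu}$ invariant in the borderline case $p=2$.
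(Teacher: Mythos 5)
Your proof is correct and follows essentially the same route as the paper: the same lower bound $I_{\mu}(u)\geq\tfrac14 M[u]-\tfrac{\mu}{p+1}\|u\|_{L^{p+1}}^{p+1}$ combined with Lemma~\ref{LemmaNormaM} and \eqref{M} for the proper local minimum, and the same scaling $u_t(x)=t^2u(tx)$ (with $M[u_t]=t^3M[u]$ and the $L^{p+1}$ term of order $t^{2p-1}$) for the unboundedness. Your closing remark about why a plain dilation would fail for small $p$ is a correct and worthwhile observation, but the argument itself is the paper's.
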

\begin{proof}
We can estimate $I_{\mu}$ as:
\begin{equation}\label{stimaJ}I_{\mu}(u) \geq \frac{1}{4}M[u]-\frac{\mu}{p+1}\|u\|^{p+1}_{L^{p+1}(\mathbb
R^3)}.\end{equation} From (\ref{stimaJ}) and Lemma \ref{LemmaNormaM}
we get
\begin{equation}\label{stimaJFinale} I_{\mu}(u)\geq g(M[u])\end{equation}
where $g(s):=\frac{1}{4}s- \frac{C}{p+1}s^{\frac{2p-1}{3}} \geq
\frac 1 5 s$ for $s \in (0,\delta)$, being $\delta$ small enough.

Thanks to \eqref{M}, we can choose $\e\in (0,1)$ such that if
$\|u\|_E<\e$, $M[u]<\delta$, and then $I_{\mu}(u) \geq \frac 1 5
M[u] \geq \frac{1}{10} \|u\|_E^4$.

\medskip We now show that $I_{\mu}$ is unbounded below.
Fix $u\in E-\{0\}$ and define, for any $t>0$, $u_t(x)=t^2u(tx)$.
We compute:
$$I_{\mu}(u_t)= t^3\left [\frac{1}{2}\int_{\mathbb R^3}|\nabla u(x)|^2 dx +\frac{1}{4}\int_{\mathbb R^3}\int_{\mathbb R^3}\frac{u^2(x)u^2(y)}{|x-y|}dxdy \right]
-\mu \frac{t^{2p-1}}{p+1}  \int_{\mathbb R^3}|u(x)|^{p+1}dx.$$
Since $p>2$, $\lim_{t\rightarrow +\infty}J(u_t)=-\infty$.

\end{proof}

So, $I_{\mu}$ satisfies the geometric conditions of the
mountain-pass theorem of Ambrosetti-Rabinowitz (see \cite{a-rab}).
However, the main problem is that the (PS) condition does not
hold. If $p\geq 3$ it is easy to prove that (PS) sequences are
bounded in $E$, but this conclusion is not known for $p\in (2,3)$.

In order to face this difficulty, we use the so-called
"monotonicity trick", a method that dates back to Struwe
\cite{struwe} (see also \cite{jeanjean}). If fact, the name is
quite inconvenient since it has been proved not to depend on
monotonicity, see \cite{jtoland}.

Within this method, we need to use a min-max argument involving a
family of curves independent of $\mu$; this is at the core of the
technique. In so doing, one obtains solutions for almost all
$\mu$: after that we can complete the existence result by using
the Pohozaev identity. Similar reasonings have been used in
\cite{a-ruiz, j-tanaka, kikuchi}.

Let us fix $\e \in (0,1)$, and consider $\mu \in [\e, \e^{-1}]$.
Define the family of curves and the corresponding min-max value:
$$\Gamma=\left\{ \gamma \in C([0,1],E), \gamma(0)=0, \ I_{\e}(\gamma(1))<0
\right\},$$
$$c_{\mu}:=\inf_{\gamma\in\Gamma}\max_{t\in [0,1]}I_{\mu}(\gamma(t))>0, \ \mu \in [\e,\ \e^{-1}].$$

Clearly, if $\mu< \mu'$ we have that $c_{\mu} \geq c_{\mu'}$, and
hence we always have $c_{\mu} \geq c_{\e^{-1}}>0$. Observe also
that for any $\mu \in [\e,\e^{-1}]$ and any $\gamma \in \Gamma$,
$I_{\mu}(\gamma(1))<0$.

Our intention is to find a critical point at level $c_{\mu}$. By
next proposition, this solution will be a ground state.

\begin{proposition} \label{gs}
Let $\mu \in [\e,\e^{-1}]$ and $u \in E-\{0\}$ be a solution of
\eqref{eq12}. Then $I_{\mu}(u) \geq c_{\mu}$.
\end{proposition}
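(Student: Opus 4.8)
The plan is to show that the level $c_\mu$ is attained along a specific curve built from the solution $u$ itself, namely the scaling path $t\mapsto u_t$ with $u_t(x)=t^2u(tx)$, suitably truncated. First I would observe that, by the computation already carried out in the previous proposition, $I_\mu(u_t)=t^3 A[u] - \mu \frac{t^{2p-1}}{p+1}\|u\|_{L^{p+1}}^{p+1}$, where $A[u]=\frac12\int|\nabla u|^2+\frac14\int\int\frac{u^2(x)u^2(y)}{|x-y|}$, and that since $u$ solves \eqref{eq12} we have the two identities $M[u]=\mu\int|u|^{p+1}$ (multiply by $u$ and integrate) and the Pohozaev identity \eqref{eqpoho}; combining them pins down the relation between $A[u]$ and $\|u\|_{L^{p+1}}^{p+1}$. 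The function $h(t):=I_\mu(u_t)$ then satisfies $h(0)=0$, $h(t)\to-\infty$ as $t\to+\infty$, $h$ is positive for small $t$, and $h'(1)=0$ because $u$ is a critical point of $I_\mu$ restricted to the ray $\{u_t\}$ (this is exactly the content of the first identity, or can be seen directly: $\frac{d}{dt}I_\mu(u_t)|_{t=1}= I_\mu'(u)[\partial_t u_t|_{t=1}]=0$). From $h'(1)=0$ together with the fact that $h(t)=at^3-bt^{2p-1}$ with $a,b>0$ and $2p-1>3$, one deduces that $t=1$ is the \emph{unique} positive critical point of $h$ and is its global maximum on $(0,\infty)$; hence $\max_{t>0} I_\mu(u_t)=I_\mu(u)$.

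Next I would turn this ray into an admissible curve in $\Gamma$. Pick $T>1$ large enough that $I_\e(u_T)<0$ (possible since $I_\e(u_t)\le I_\mu(u_t)\to-\infty$ — wait, one must be careful with the direction of the inequality in $\mu$; since $\e\le\mu$, we have $I_\e \ge I_\mu$, so instead pick $T$ large with $I_\e(u_T)<0$ directly, using that $I_\e(u_t)=at^3 - \e\frac{t^{2p-1}}{p+1}\|u\|_{L^{p+1}}^{p+1}\to-\infty$). Define $\gamma(s)=u_{sT}$ for $s\in(0,1]$ and $\gamma(0)=0$; continuity at $s=0$ holds because $\|u_t\|_E\to 0$ as $t\to 0^+$ (the gradient term scales like $t^{3/2}$ in $\|\cdot\|_E$ and the Coulomb term like $t^{3/4}$, both $\to 0$). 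Then $\gamma\in\Gamma$, and therefore
$$ c_\mu \le \max_{s\in[0,1]} I_\mu(\gamma(s)) = \max_{t\in(0,T]} I_\mu(u_t) = \max_{t>0} I_\mu(u_t) = I_\mu(u),$$
where the second-to-last equality uses that the maximum of $h$ on $(0,\infty)$ is attained at $t=1\le T$. This gives $I_\mu(u)\ge c_\mu$, as claimed.

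The main obstacle, and the point requiring the most care, is verifying that $t\mapsto u_t$ is a continuous curve in $E$ all the way down to $t=0$ (so that $\gamma$ is genuinely in $C([0,1],E)$ with $\gamma(0)=0$) and that the scaling identities for $M$, the Coulomb term and the $L^{p+1}$ norm are exactly as stated — these are elementary changes of variables but must be bookkept correctly, since the whole argument hinges on the exponents $3$ and $2p-1$ and on $2p-1>3$, i.e. $p>2$. A secondary subtlety is that $h'(1)=0$ uses that $u$ is a critical point of $I_\mu$ on all of $E$, applied to the particular tangent vector $\partial_t u_t|_{t=1}=2u+x\cdot\nabla u$, which must lie in $E$; since $u\in E\cap H^2_{loc}$ one can justify this by the density of $C_0^\infty$ and the fact that along the smooth ray the derivative is computed classically, or simply bypass it by using the two integral identities (equation tested with $u$, and Pohozaev) to get the relation $3A[u]=\frac{3\mu}{2(p+1)}\cdot\frac{2p-1}{?}\|u\|_{L^{p+1}}^{p+1}$ directly and then checking $h'(1)=0$ by that arithmetic. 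Either route closes the argument.
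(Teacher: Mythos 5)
Your argument is correct and is essentially identical to the paper's own proof: both use the scaling path $t\mapsto u_t=t^2u(t\cdot)$, deduce $f'(1)=0$ from the tested equation together with the Pohozaev identity \eqref{eqpoho} so that the maximum along the ray equals $I_{\mu}(u)$, and then reparametrize a truncation of the ray into an admissible curve of $\Gamma$ to conclude $c_{\mu}\leq I_{\mu}(u)$. The technical points you flag (continuity at $t=0$ and justifying $f'(1)=0$ via the two integral identities rather than via the tangent vector $2u+x\cdot\nabla u$) are handled the same way in the paper.
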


\begin{proof} Given such solution $u$, let us define again
$u_t(x)= t^2 u(t x)$, and $\gamma: \R \to E$, $\gamma(t)=u_t$.
Clearly $\gamma$ is a continuous curve in $E$ and $\gamma(0)=0$.
Moreover:
$$f(t)=I_{\mu}(\gamma(t))=t^3\left [\frac{1}{2}\int_{\mathbb R^3}|\nabla u(x)|^2 dx +\frac{1}{4}\int_{\mathbb R^3}\int_{\mathbb R^3}\frac{u^2(x)u^2(y)}{|x-y|}dxdy \right]
-\mu \frac{t^{2p-1}}{p+1}  \int_{\mathbb R^3}|u(x)|^{p+1}dx.$$ It
is easy to check that $f$ is $C^1$ and has a unique critical point
that corresponds to its maximum. Let us compute its derivative at
$t=1$:
$$f'(1)= \frac{3}{2}\int_{\mathbb R^3}|\nabla u(x)|^2 dx +\frac{3}{4}\int_{\mathbb R^3}\int_{\mathbb
R^3}\frac{u^2(x)u^2(y)}{|x-y|}dxdy -\mu \frac{2p-1}{p+1}
\int_{\mathbb R^3}|u(x)|^{p+1}dx.$$ Recall now that $u$ is a
solution, and hence verifies the Pohozaev-type identity
\eqref{eqpoho}. From this and from the identity $I_{\mu}'(u)(u)=0$
we deduce that $f'(1)=0$. That is:
$$ \max_{t \in \R} I_{\mu}(\gamma(t)) = I_{\mu}(u).$$

Since $\lim_{t \to +\infty} f(t)= -\infty$, we can take $M>0$ such
that $I_{\e}(\gamma(M))<0$. Reparametrizing $\gamma_0: [0,1] \to
E$, $\gamma_0(t)= \gamma(Mt)$, we obtain that $\gamma_0 \in
\Gamma$. Therefore, $c_{\mu} \leq I_{\mu}(u)$.

\end{proof}

We dedicate the rest of the section to prove that $c_{\mu}$ is a
critical value of $I_{\mu}$.

\begin{theorem} \label{monotonicity} There holds:
\begin{enumerate}
\item The map $[\e, \e^{-1}] \ni \mu \mapsto c_{\mu}$ is
nonincreasing and left continuous. In particular, it is almost
everywhere differentiable. Let us denote by $J \subset [\e,
\e^{-1}]$ the set of differentiability of $J$.

\item For any $\mu \in J$, there exists a bounded sequence
$\{u_n\} \subset E$ such that $I_{\mu}(u_n) \to c_{\mu}$,
$I_{\mu}'(u_n) \to 0$.

\end{enumerate}
\end{theorem}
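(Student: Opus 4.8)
The plan for (1) is short. Write $\Phi(u):=\frac{1}{p+1}\intr|u|^{p+1}\ge 0$, so that $I_\mu(u)=I_{\mu'}(u)+(\mu'-\mu)\Phi(u)$ and in particular $I_\mu\ge I_{\mu'}$ pointwise when $\mu\le\mu'$; this gives at once that $\mu\mapsto c_\mu$ is nonincreasing (as already noted). For left continuity at $\mu_0$ I would take $\mu_n\uparrow\mu_0$: then $c_{\mu_n}$ is nonincreasing with $c_{\mu_n}\ge c_{\mu_0}$, so it has a limit $L\ge c_{\mu_0}$; given $\d>0$, picking $\gamma\in\Gamma$ with $\max_tI_{\mu_0}(\gamma(t))\le c_{\mu_0}+\d$ and using that $t\mapsto\Phi(\gamma(t))$ is bounded on $[0,1]$ (by the continuous embedding $E\hookrightarrow L^{p+1}(\R^3)$, say by $C_\gamma$), one gets $\max_tI_{\mu_n}(\gamma(t))\le c_{\mu_0}+\d+(\mu_0-\mu_n)C_\gamma\to c_{\mu_0}+\d$, whence $L\le c_{\mu_0}+\d$; as $\d$ is arbitrary, $L=c_{\mu_0}$. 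A.e.\ differentiability then follows from Lebesgue's theorem on monotone functions, which defines $J$.

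For (2) I would run the monotonicity trick of Struwe and Jeanjean. Fix $\mu\in J$ and $\mu_n\uparrow\mu$; since $\mu$ is a differentiability point, the quotients $\frac{c_{\mu_n}-c_\mu}{\mu-\mu_n}$ (nonnegative because $c$ is nonincreasing and $\mu_n<\mu$) converge to a finite $\ell\ge 0$. For each $n$ choose $\gamma_n\in\Gamma$ with $\max_tI_{\mu_n}(\gamma_n(t))\le c_{\mu_n}+(\mu-\mu_n)$. The core step is a uniform bound on $\gamma_n$ where its energy is close to the top. Put $\mathcal T_n:=\{t\in[0,1]:\ I_\mu(\gamma_n(t))\ge c_\mu-(\mu-\mu_n)\}$. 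For $t\in\mathcal T_n$, from $(\mu-\mu_n)\Phi(\gamma_n(t))=I_{\mu_n}(\gamma_n(t))-I_\mu(\gamma_n(t))$, the choice of $\gamma_n$, and $t\in\mathcal T_n$, one gets $(\mu-\mu_n)\Phi(\gamma_n(t))\le(c_{\mu_n}-c_\mu)+2(\mu-\mu_n)$, hence $\Phi(\gamma_n(t))\le\frac{c_{\mu_n}-c_\mu}{\mu-\mu_n}+2$, which is bounded in $n$. Since $I_\mu(\gamma_n(t))\le\max_tI_{\mu_n}(\gamma_n(t))$ is also bounded, the inequality $\tfrac14M[\gamma_n(t)]\le I_\mu(\gamma_n(t))+\mu\Phi(\gamma_n(t))$ bounds $M[\gamma_n(t)]$, hence (by the definition of the $E$-norm, cf.\ \eqref{M}) $\|\gamma_n(t)\|_E$, uniformly in $n$: there is $K=K(\mu)$ with $\|\gamma_n(t)\|_E\le K$ for all $t\in\mathcal T_n$ and all large $n$. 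I would also note that $b_n:=\max_tI_\mu(\gamma_n(t))$ satisfies $c_\mu\le b_n\le c_{\mu_n}+(\mu-\mu_n)\to c_\mu$ (by $\gamma_n\in\Gamma$ and left continuity), so $b_n\to c_\mu$.

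The bounded (PS) sequence is then produced by a deformation argument. Arguing by contradiction, suppose $I_\mu$ has no bounded (PS) sequence at level $c_\mu$; then there are $\alpha,\rho>0$ with $\|I_\mu'(u)\|_{E^*}\ge\alpha$ whenever $\|u\|_E\le K+1$ and $|I_\mu(u)-c_\mu|\le\rho$. For $n$ large (so $\mu-\mu_n$ and $b_n-c_\mu$ are tiny), any $\gamma_n(t)$ with $\|\gamma_n(t)\|_E>K$ has $t\notin\mathcal T_n$, i.e.\ $I_\mu(\gamma_n(t))<c_\mu-(\mu-\mu_n)<c_\mu$. Using a locally Lipschitz pseudo-gradient field for $I_\mu\in C^1(E)$ and a cut-off supported on $\overline{B_E(0,K)}$, I would apply the quantitative deformation lemma centred at the midpoint of $[c_\mu-(\mu-\mu_n),\,b_n]$, with half-width $\tfrac12(b_n-c_\mu+(\mu-\mu_n))\to 0$ and displacement parameter small (it may shrink with $n$, and is \emph{not} coupled to $\mu-\mu_n$): this yields $\eta_n$ which does not increase $I_\mu$, equals the identity off the associated level-band (hence fixes $\gamma_n(0)=0$ and $\gamma_n(1)$, both at levels $\le 0$) and off $B_E(0,K+1)$, and satisfies $I_\mu(\eta_n(u))\le c_\mu-(\mu-\mu_n)$ for all $u\in\overline{B_E(0,K)}$ with $I_\mu(u)\le b_n$. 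Then $\tilde\gamma_n:=\eta_n\circ\gamma_n\in\Gamma$, and $\max_tI_\mu(\tilde\gamma_n(t))<c_\mu$: points with $\|\gamma_n(t)\|_E\le K$ are pushed to $\le c_\mu-(\mu-\mu_n)$, and points with $\|\gamma_n(t)\|_E>K$ already lie below $c_\mu-(\mu-\mu_n)$ and the energy does not increase. This contradicts the definition of $c_\mu$, so a bounded (PS) sequence at level $c_\mu$ exists, which is assertion (2).

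\smallskip
\noindent\textbf{Main obstacle.} The algebraic parts and (1) are routine; the delicate point is the deformation in (2), where two things must be reconciled: the deformation may only move points of $\gamma_n$ that are \emph{a priori} bounded in $E$ — exactly what differentiability of $\mu\mapsto c_\mu$ at $\mu$ provides, through the bound on $\Phi(\gamma_n(t))$ over $\mathcal T_n$ — and its flow must not escape $B_E(0,K+1)$, which is handled by taking the displacement small and centring the active level-band on the shrinking interval $[c_\mu-(\mu-\mu_n),b_n]$ so that it stays inside the region where both the a priori bound and (under the contradiction hypothesis) the lower bound on $\|I_\mu'\|$ hold. This is the substance of the monotonicity trick \cite{struwe,jeanjean,jtoland}.
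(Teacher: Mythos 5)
Your proposal is correct and is precisely the Struwe--Jeanjean monotonicity trick that the paper itself invokes: the paper dismisses assertion (1) as evident and for assertion (2) simply cites the general results of \cite{jeanjean, jtoland} (and Proposition 2.3 of \cite{a-ruiz}), whereas you reproduce the proof of that cited result in full. The details check out — the a priori bound on $\|\gamma_n(t)\|_E$ over $\mathcal T_n$ coming from differentiability of $\mu\mapsto c_\mu$, and the deformation confined to $\overline{B_E(0,K+1)}$ with a level band shrinking to $c_\mu$, are exactly the mechanism of the referenced theorem.
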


The first assertion of the above theorem is quite evident. For the
proof of the second assertion see the general result of
\cite{jeanjean, jtoland} (see also Proposition 2.3 of
\cite{a-ruiz}).

Next proposition studies the behavior of bounded (PS) sequences:

\begin{proposition}\label{tipoWillemPag120}
Let $\{u_n\}\subset E$ be a bounded Palais-Smale of $I_{\mu}$
sequence at a certain level $c>0$. Then, up to a subsequence,
there exists $k \in \N \cup \{0\}$ and a finite sequence
$$\left(v_0, v_1,..,v_k\right)\subset E,\ v_i\not\equiv 0,\ \mbox{
for } i>0$$ of solutions of $$-\Delta u+\phi_u u=\mu u^{p}$$ and $k$
sequences $\{\xi_n^1\},..,\{\xi_n^k\}\subset \mathbb R^3,$ such that
$$
\begin{array}{lr}
\|u_n-v_0-\sum_{i=1}^k v_i(\cdot-\xi_n^i)\|_E\rightarrow 0\\
\\
|\xi_n^i|\rightarrow +\infty,\ |\xi_n^i-\xi_n^j|\rightarrow +\infty,\ i\neq j,\ \mbox{ as }n\rightarrow +\infty\\
\\
\sum_{i=0}^k I_{\mu} (v_i)=c ,\ M[u_n] \to \sum_{i=0}^k M[v_i].
\end{array}
$$
\end{proposition}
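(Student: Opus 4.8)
The plan is to carry out the global compactness (``bubbling'') analysis of Struwe (\cite{struwe}), combined with the concentration--compactness dichotomy of Lions (\cite{lions}), in the space $E$: the decomposition will be produced by iteratively extracting a fixed profile $v_0$ (the weak limit) together with finitely many translated profiles. First I pass to a subsequence so that, by boundedness of $\{u_n\}$ and Lemma \ref{weak}, $u_n\rightharpoonup v_0$ in $E$ (hence $u_n\rightharpoonup v_0$ in $D^{1,2}(\R^3)$ and $\phi_{u_n}\rightharpoonup\phi_{v_0}$), and moreover $u_n\to v_0$ a.e.\ and in $L^{p+1}_{loc}$. To see that $v_0$ solves \eqref{eq12} I test $I_{\mu}'(u_n)\to 0$ against $\varphi\in C_0^{\infty}(\R^3)$: the Dirichlet term passes to the limit by weak convergence, $T(u_n,u_n,u_n,\varphi)\to T(v_0,v_0,v_0,\varphi)$ by Lemma \ref{tecnico}, and $\intr|u_n|^{p-1}u_n\varphi\to\intr|v_0|^{p-1}v_0\varphi$ since $p+1<6$; density of $C_0^{\infty}(\R^3)$ in $E$ (Proposition \ref{app}) then yields $I_{\mu}'(v_0)=0$.

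The engine of the iteration is a two-term Brezis--Lieb splitting, which I would state as follows: if $w_n\rightharpoonup v$ in $E$, $\{w_n\}$ bounded, $I_{\mu}'(w_n)\to 0$ and $r_n=w_n-v$, then along a subsequence (with $r_n\to 0$ a.e.)
$$ M[w_n]=M[v]+M[r_n]+o(1),\qquad I_{\mu}(w_n)=I_{\mu}(v)+I_{\mu}(r_n)+o(1),\qquad I_{\mu}'(r_n)\to 0\ \text{in}\ E'. $$
Indeed, the Dirichlet energy splits because $\intr\nabla v\cdot\nabla r_n\to 0$; the term $\intr|w_n|^{p+1}$ splits by the classical Brezis--Lieb lemma; and for the Coulomb energy one expands $T(w_n,w_n,w_n,w_n)$ by multilinearity into $T(v,v,v,v)+T(r_n,r_n,r_n,r_n)$ plus cross terms, each of which carries a factor $r_n\rightharpoonup 0$ and hence vanishes by Lemma \ref{tecnico} after reordering the four slots so that a fixed factor $v$ stands last. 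The convergence $I_{\mu}'(r_n)\to 0$ comes from the same expansions applied to the linear functionals $\varphi\mapsto I_{\mu}'(\cdot)\varphi$, using $T(a,b,c,d)^2\le T(a,b,a,b)\,T(c,d,c,d)$ and $4\pi\,T(\varphi,\varphi,\varphi,\varphi)\le\|\varphi\|_E^4$ to get the estimates uniformly for $\|\varphi\|_E\le 1$, together with the pointwise bound $\bigl||a+b|^{p-1}(a+b)-|a|^{p-1}a-|b|^{p-1}b\bigr|\le C(|a|^{p-1}|b|+|a|\,|b|^{p-1})$ for the local nonlinearity.

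Now I iterate. Set $w_n^1:=u_n-v_0\rightharpoonup 0$; by the splitting, $I_{\mu}'(w_n^1)\to 0$, $M[u_n]=M[v_0]+M[w_n^1]+o(1)$ and $c=I_{\mu}(v_0)+\lim_n I_{\mu}(w_n^1)$. Given $w_n^j\rightharpoonup 0$ with $I_{\mu}'(w_n^j)\to 0$, consider $\d_j=\limsup_n\sup_{y\in\R^3}\int_{B(y,1)}|w_n^j|^{p+1}$. If $\d_j=0$, Lemma \ref{lemalions} gives $w_n^j\to 0$ in $L^{p+1}(\R^3)$, hence from $I_{\mu}'(w_n^j)(w_n^j)\to 0$ and \eqref{M} also $w_n^j\to 0$ in $E$, and the process stops with $k:=j-1$. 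If $\d_j>0$, choose $\xi_n^j$ with $\int_{B(\xi_n^j,1)}|w_n^j|^{p+1}\ge\d_j/2$; since $w_n^j\rightharpoonup 0$ we get $|\xi_n^j|\to\infty$, and since $w_n^j(\cdot+\xi_n^i)\rightharpoonup 0$ for $i<j$ (an induction using the previously established $|\xi_n^i-\xi_n^{i'}|\to\infty$) we get $|\xi_n^j-\xi_n^i|\to\infty$ for all $i<j$. Along a subsequence $w_n^j(\cdot+\xi_n^j)\rightharpoonup v_j$, where $v_j\not\equiv 0$ (local compactness forces $\int_{B(0,1)}|v_j|^{p+1}\ge\d_j/2$) and $v_j$ solves \eqref{eq12} (by translation invariance of $I_{\mu}$ and the argument of the first paragraph). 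Put $w_n^{j+1}:=w_n^j-v_j(\cdot-\xi_n^j)\rightharpoonup 0$ and apply the splitting in the variable translated by $\xi_n^j$ to obtain $I_{\mu}'(w_n^{j+1})\to 0$ together with the corresponding additive identities, then repeat.

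Termination is forced by an energy gap: by Corollary \ref{hola} every extracted profile satisfies $M[v_j]>\eta$, while telescoping the $M$-splittings gives $\sup_n M[u_n]\ge\sum_{i=1}^{j}M[v_i]+o(1)\ge j\eta+o(1)$, so, since $\{M[u_n]\}$ is bounded, the iteration ends at some $k\le\eta^{-1}\sup_n M[u_n]$. Unwinding the definitions, $u_n-v_0-\sum_{i=1}^{k}v_i(\cdot-\xi_n^i)=w_n^{k+1}\to 0$ in $E$, which is the asserted convergence; and telescoping the energy and $M$ splittings, together with $I_{\mu}(w_n^{k+1})\to 0$ and $M[w_n^{k+1}]\to 0$, gives $\sum_{i=0}^{k}I_{\mu}(v_i)=c$ and $M[u_n]\to\sum_{i=0}^{k}M[v_i]$. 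The step I expect to be the main obstacle is the Coulomb part of the splitting lemma --- in particular, preserving the Palais--Smale property $I_{\mu}'(r_n)\to 0$ and controlling all cross terms uniformly over $\|\varphi\|_E\le 1$ --- which is precisely the situation Lemma \ref{tecnico} is tailored for; the bookkeeping of the mutually diverging centres $\xi_n^i$ and the termination count are then routine.
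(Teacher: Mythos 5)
Your proposal is correct and follows essentially the same route as the paper's proof: weak limit extraction with Lemma \ref{tecnico} to identify $v_0$ as a critical point, Brezis--Lieb splitting of the Dirichlet, Coulomb and $L^{p+1}$ terms, Lions' lemma (Lemma \ref{lemalions}) to locate the diverging translations, and termination via the uniform lower bound $M[v_i]>\eta$ of Corollary \ref{hola}. The only difference is cosmetic: you spell out the preservation of the Palais--Smale property for the remainders $I_{\mu}'(r_n)\to 0$, which the paper dismisses as ``obvious''.
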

\begin{proof}

\textbf{ Step 1} Since $\{u_n\}$ is bounded and $E$ is a reflexive Banach space,
then, up to a subsequence, we may assume that $u_n \rightharpoonup
v_0$ in $E.$ Moreover $I_{\mu}'(v_0)=0$; indeed, if $\psi \in
C_0^{\infty}(\R^3)$,
$$
I_{\mu}'(u_n)(\psi)=\intr\nabla u_n\nabla \psi+\intr \phi_{u_n}u_n
\psi -\mu\intr |u_n|^{p-1}u_n \psi \to 0,$$
 $$\intr\nabla u_n\nabla \psi\rightarrow \intr\nabla v_0\nabla
\psi,\qquad \mbox{(since $u_n\rightharpoonup v_0$ in
$D^{1,2}(\mathbb R^3)$)},$$
$$\intr \phi_{u_n}u_n \psi \rightarrow \intr \phi_{v_0}v_0 \psi,\qquad \mbox{(from Lemma \ref{tecnico})},$$
$$\intr |u_n|^{p-1}u_n\psi \rightarrow \intr |v_0|^{p-1}v_0 \psi,$$
(observe that $u_n\rightarrow v_0$ in $L^p(K)$ when $K$ is
compact).

\medskip

Define  $$u_{n,1}:=u_n-v_0.$$ We claim now that
\begin{equation}\label{condizA} I_{\mu}(u_n) - I_{\mu}(u_{n,1}) \to I_{\mu}(v_0),\end{equation}
\begin{equation}\label{condizB} M[u_n] - M[u_{n,1}] \to M[v_0].\end{equation}

From weak convergence in $D^{1,2}(\mathbb R^3)$, it follows that
$\int_{\mathbb R^3}|\nabla u_n|^2\, dx - \int_{\mathbb R^3}|\nabla
(u_n-v_0)|^2\, dx \to \int_{\mathbb R^3}|\nabla v_0|^2\, dx$. By
passing to a convenient subsequence, if necessary, we can assume
that $u_n \to v_0$ almost everywhere. By the Brezis-Lieb lemma (see
for instance \cite{lieb, willem}, we have:
\begin{eqnarray*}
&&\int_{\mathbb R^3}|u_n|^{p+1}dx - \int_{\mathbb
R^3}|u_n-v_0|^{p+1} \, dx \to \int_{\mathbb R^3}|v_0|^{p+1}dx.
\end{eqnarray*}

We use the notation and the result of Lemma \ref{tecnico}, to
conclude:
$$ T(u_n-v_0, u_n-v_0, u_n-v_0, u_n-v_0)= T(u_n, u_n-v_0, u_n-v_0, u_n-v_0)+o(1) = $$ $$T(u_n, u_n, u_n-v_0, u_n-v_0)+o(1)=
T(u_n, u_n, u_n, u_n-v_0)+o(1)=$$$$T(u_n, u_n, u_n, u_n)-
T(v_0,v_0,v_0,v_0)+o(1).$$

This finishes the proof of the claim.

\medskip

If $u_{n,1}\rightarrow 0\mbox{ in }E$ we are done, since in this
case we have $I_{\mu}(u_{n,1})\to 0$, $M[u_{n,1}]\to 0$,
$$ I_{\mu}(v_0)=c,\ M[u_n] \to M[v_0].$$
Observe that in this case $v_0\not\equiv 0$  (since $c>0$).

Assume now that $u_{n,1}\not\rightarrow 0\mbox{ in }E$. Recall that
$\{u_n\}$ is a (PS) sequence and that $v_0$ is a solution, then
\begin{equation} \label{pufff} I_{\mu}'(u_n)(u_n)=M[u_n] -\mu \int_{\R^3} u_n^{p+1} \to 0
=M[v_0] - \mu \int_{\R^3} v_0^{p+1}. \end{equation}

Recall that $u_n \rightharpoonup v_0$ in $E$; by Lemma \ref{weak},
this implies that $u_n \rightharpoonup v_0$ and $\phi_{u_n}
\rightharpoonup \phi_{v_0}$ in $D^{1,2}(\mathbb R^3)$. Since $u_n
\not\rightarrow v_0$ in $E$, at least one of these convergences is
not strong in $D^{1,2}(\mathbb R^3)$, which implies that, up to a subsequence,
$$\lim_{n \to +\infty} M[u_n] = \lim_{n \to +\infty} \|u_n\|_{D^{1,2}(\mathbb R^3)}^2 +
\| \phi_{u_n}\|_{D^{1,2}\mathbb R^3}^2
> M[v_0].$$ Combining this with \eqref{pufff} we conclude that $u_n \not \to v_0$ in
$L^{p+1}(\mathbb R^3)$.

By Lemma \ref{lemalions}, given any $q\in [3,6)$, there exist
$\delta_1>0,$ $\{\xi_n^1\}\subset\mathbb R^3,$ such that
\begin{equation}\label{eqDaLemmaLionsInE}
\int_{B_1}|u_{n,1}(x+\xi_n^1)|^q \, dx\geq\delta_1>0.
\end{equation}

Since $u_{n,1} \rightharpoonup 0$, we have that $|\xi_n^1| \to +
\infty$.\\

\textbf{Step 2} Let us consider now the sequence
$\{u_{n,1}(\cdot+\xi_n^1)\}$. Obviously, it is a bounded (PS)
sequence at level $c-I_{\mu}(v_0)$ (recall \eqref{condizA}). Up to a
subsequence, we may assume that
$u_{n,1}(\cdot+\xi_n^1)\rightharpoonup v_1$ in $E$. As in Step 1 we
have that $v_1$ is a solution. By \eqref{eqDaLemmaLionsInE} we also
have that $v_1 \neq 0$.

Define $$u_{n,2}:=u_{n,1}-v_1(\cdot-\xi_n^1).$$

Arguing as in Step 1 and taking into account \eqref{condizA},
\eqref{condizB}, we obtain
$$ I_{\mu}(u_{n,2}) = I_{\mu}(u_{n,1})-I_{\mu}(v_1) + o(1)=I_{\mu}(u_n)-I_{\mu}(v_0)-I_{\mu}(v_1)+o(1).$$
Analogously, we also have
$$M[u_{n,2}] = M[u_{n,1}]-M[v_1] + o(1)=M[u_n]-M[v_0]-M[v_1]+o(1).$$

Observe that $u_{n,2} \rightharpoonup 0$ since both summands
converge weakly to zero, and $u_{n,2}(\cdot + \xi_n^1)
\rightharpoonup 0$ by the definition of $v_1$ (both weak
convergences must be understood in $E$).

If $u_{n,2}\rightarrow 0$ in $E,$ then we are done. Otherwise, as in
Step 1, we can show that $u_{n,2} \not \to 0$ in $L^{p+1}(\mathbb
R^3)$. By Lemma \ref{lemalions}, given any $q\in [3,6)$ there exist
$\delta_2>0,$ $\{\xi_n^2\}\subset\mathbb R^3$,  such that
\begin{equation}\label{eqDaLemmaLionsInE1}
\int_{B_1}|u_{n,2}(x+\xi_n^2)|^q \, dx\geq\delta_2>0.
\end{equation}

Since $u_{n,2} \rightharpoonup 0$ and $u_{n,2}(\cdot + \xi_n^1)
\rightharpoonup 0$ we deduce that $|\xi_n^2| \to +\infty$,
$|\xi_n^2-\xi_n^1|\to +\infty$. Therefore, up to a subsequence,
$u_{n,2}(\cdot + \xi_n^2) \rightharpoonup v_2 \neq 0$. We now
define:
$$ u_{n,3}=u_{n,2} - v_2(\cdot - \xi_n^2). $$

Iterating the above procedure we construct sequences
$\{u_{n,j}\}_{j=0,1,2,...}$  and $\{\xi_n^j\}_j,$ in the following
way
$$
\begin{array}{lr}
u_{n,\, j+1}=u_{n,j}-v_{j}\left(\cdot-\xi_n^j\right),\\
v_j:=\mbox{ weak}\lim u_{n,j}\left(\cdot+\xi_n^j\right),
\end{array}$$

\begin{eqnarray*}
&& I_{\mu}'(v_j)=0, \mbox{ for } j \geq 0 , \ \ v_j\not\equiv 0 \mbox{ for } j \geq 1,\\
&& I_{\mu}(u_{n,j}) = I_{\mu}(u_n)-\sum_{h=0}^{j-1}I_{\mu}(v_{h})+ o(1),\\
&&M[u_{n,j}] = M[u_n]-\sum_{h=0}^{j-1}M[v_{h}]+o(1).
\end{eqnarray*}
Now observe that $M[u_n]$ is bounded and $M[v_h] > \eta>0$ by
Corollary \ref{hola}. This implies that the iteration must stop at
a certain point, that is, for some $k$, $u_{n,k} \to 0$ in $E$.
This finishes the proof.

\end{proof}

\begin{corollary} \label{convergence} Let $u_n$ be a bounded (PS) sequence for
$I_{\mu}$ at level $c_{\mu}$. Then $u_n$ converges in $E$ (up to
translations) to a solution $u$, and $I_{\mu}(u)=c_{\mu}$.
\end{corollary}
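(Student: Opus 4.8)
The plan is to apply Proposition \ref{tipoWillemPag120} to the bounded (PS) sequence $u_n$ at level $c_\mu$, which produces a decomposition $u_n = v_0 + \sum_{i=1}^k v_i(\cdot - \xi_n^i) + o(1)$ in $E$, where each $v_i$ ($i \geq 1$) is a nontrivial solution of \eqref{eq12}, $v_0$ is a (possibly trivial) solution, and $\sum_{i=0}^k I_\mu(v_i) = c_\mu$. The goal is to show that exactly one $v_i$ is nonzero and all others vanish, so that the decomposition reduces to $u_n \to v_i(\cdot - \xi_n^i)$ for a single index.

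First I would invoke Proposition \ref{gs}: every nontrivial solution $v$ of \eqref{eq12} satisfies $I_\mu(v) \geq c_\mu$. Next I would argue that $I_\mu$ is nonnegative on solutions: multiplying \eqref{eq12} by $u$ gives $M[u] = \mu\int |u|^{p+1}$, whence $I_\mu(u) = \frac14 M[u] - \frac{\mu}{p+1}\int|u|^{p+1} = \left(\frac14 - \frac{1}{p+1}\right)M[u] \geq 0$ since $p > 2 > 3$ — wait, we need $\frac14 - \frac{1}{p+1} \geq 0$, i.e. $p \geq 3$; for $p \in (2,3)$ this sign argument fails, so instead I would use the Pohozaev identity \eqref{eqpoho} together with $I_\mu'(u)(u)=0$. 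These two linear relations in the three quantities $\int|\nabla u|^2$, $T$-term, $\int|u|^{p+1}$ let one solve for $I_\mu(u)$ as a positive multiple of, say, $\int|\nabla u|^2$, yielding $I_\mu(u) \geq 0$ for every solution (and $I_\mu(u)=0$ only for $u \equiv 0$). Concretely, from $\frac32\int|\nabla u|^2 + \frac34 T' - \mu\frac{2p-1}{p+1}\int|u|^{p+1}=0$ (the relation $f'(1)=0$ from the proof of Proposition \ref{gs}, where $T' = \int\int \frac{u^2(x)u^2(y)}{|x-y|}$) and $\int|\nabla u|^2 + T' = \mu\int|u|^{p+1}$, one eliminates the $L^{p+1}$ term and finds a fixed positive linear combination of $\int|\nabla u|^2$ and $T'$ equals a positive multiple of $I_\mu(u)$.

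Now the key counting step: since $\sum_{i=0}^k I_\mu(v_i) = c_\mu$, each term is nonnegative, and each nonzero $v_i$ contributes at least $c_\mu > 0$, there can be at most one nonzero summand. If $k \geq 1$, then $v_1 \not\equiv 0$ contributes $\geq c_\mu$, forcing all other $I_\mu(v_i)=0$, hence $v_0 \equiv 0$ and $v_i \equiv 0$ for $i \geq 2$; moreover $I_\mu(v_1) = c_\mu$. If $k = 0$, then $I_\mu(v_0) = c_\mu > 0$, so $v_0 \not\equiv 0$ is itself the solution and $u_n \to v_0$ in $E$. Either way, after relabeling, $u_n$ converges in $E$, up to the translation by $\xi_n^{i}$, to a single nontrivial solution $u$ with $I_\mu(u) = c_\mu$.

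The main obstacle is the positivity $I_\mu(u) \geq 0$ on all solutions in the range $p \in (2,3)$, where the naive coefficient comparison is insufficient; this is why the Pohozaev identity from Proposition \ref{poho} is essential rather than cosmetic. One should also double-check that $I_\mu$ is genuinely continuous along each convergent piece so that the limit of $I_\mu(u_n)$ is $\sum I_\mu(v_i)$ — but this is exactly the content already packaged in Proposition \ref{tipoWillemPag120}, so no extra work is needed. A minor point to state cleanly: the translates $\xi_n^i$ for the surviving index may be the trivial sequence $\xi_n \equiv 0$ (when $v_0$ survives) or a genuinely divergent one; in both cases the statement "converges in $E$ up to translations" is satisfied.
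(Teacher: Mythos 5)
Your argument is correct and follows the paper's proof: apply Proposition \ref{tipoWillemPag120}, then use Proposition \ref{gs} to see that each nonzero profile contributes at least $c_{\mu}$ to the sum $\sum_{i=0}^k I_{\mu}(v_i)=c_{\mu}$, so exactly one profile survives and $u_n$ converges (up to the corresponding translations) to it. Your separate Pohozaev-based verification that $I_{\mu}\geq 0$ on all solutions is redundant: the only possibly trivial profile is $v_0$, with $I_{\mu}(0)=0$, while every nontrivial solution already has energy at least $c_{\mu}>0$ by Proposition \ref{gs}, which is all the nonnegativity the counting step needs.
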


\begin{proof}

We apply Proposition \ref{tipoWillemPag120}; in particular,
$$\sum_{i=0}^kI_{\mu} (v_i)=c_{\mu},$$ where $v_i$ are solutions of \eqref{eq12} and
only $v_0$ could be zero. By Proposition \ref{gs}, $I_{\mu}(v_i)
\geq c_{\mu}$ whenever $v_i \neq 0$. There are two possibilities
then: either $v_0 \neq 0$ and $k=0$, or $v_0=0$ and $k=1$. In the
first case, $v_0$ is a solution at level $c_{\mu}$ and $u_n \to v$
in $E$. In the latter, $v_1$ is a solution at level $c_{\mu}$ and
$u_n(\cdot + \xi_n^1) \to v_1$ in $E$.

\end{proof}

Next result concludes, together with Proposition \ref{gs}, the
proof of Theorem \ref{teo1}:

\begin{theorem} \label{jia} For any $\mu \in (\e, \e^{-1}]$, there exists a positive solution
$u \in E$ of \eqref{eq12}, and $I_{\mu}(u)=c_{\mu}$.\end{theorem}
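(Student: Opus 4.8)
The plan is to produce the solution for a general $\mu$ by approximating from the left inside the full–measure set $J$ of Theorem~\ref{monotonicity}, and then to invoke Corollary~\ref{convergence}. So, fix $\mu\in(\e,\e^{-1}]$ and pick $\mu_n\in J$ with $\mu_n\uparrow\mu$; this is possible because $[\e,\e^{-1}]\setminus J$ has measure zero, hence $J$ accumulates at $\mu$ from below. For each $n$, Theorem~\ref{monotonicity}(2) yields a bounded Palais-Smale sequence of $I_{\mu_n}$ at level $c_{\mu_n}$, which Corollary~\ref{convergence} turns into a nontrivial solution $u_n\in E$ of \eqref{eq12} (with $\mu_n$ in place of $\mu$) satisfying $I_{\mu_n}(u_n)=c_{\mu_n}$. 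By the left continuity of $\mu\mapsto c_\mu$, $c_{\mu_n}\to c_\mu$.

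The crucial step is a bound for $\{u_n\}$ in $E$ which is uniform in $n$; this is where the Pohozaev identity enters. Write $a_n=\intr|\nabla u_n|^2$, $b_n=\intr\intr\frac{u_n^2(x)u_n^2(y)}{|x-y|}\,dx\,dy$ and $c_n=\intr|u_n|^{p+1}$. The Nehari identity $M[u_n]=\mu_n c_n$ (that is, $I_{\mu_n}'(u_n)(u_n)=0$), the energy identity $\tfrac12 a_n+\tfrac14 b_n-\tfrac{\mu_n}{p+1}c_n=c_{\mu_n}$, and the Pohozaev identity \eqref{eqpoho}, $\tfrac12 a_n+\tfrac54 b_n-\tfrac{3\mu_n}{p+1}c_n=0$, form a linear system in $(a_n,b_n,\mu_n c_n)$ whose determinant equals $\tfrac{2-p}{2(p+1)}\neq 0$. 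Solving it expresses each unknown as a fixed positive multiple of $c_{\mu_n}$; in particular $a_n=\tfrac{5p-7}{2(p-2)}\,c_{\mu_n}$ and $\mu_n c_n=\tfrac{3(p+1)}{2(p-2)}\,c_{\mu_n}$. Since $c_{\mu_n}$ is bounded and $\mu_n\geq\e$, the quantities $a_n,b_n,c_n$ and $M[u_n]=a_n+b_n$ are bounded, hence so is $\|u_n\|_E$ by \eqref{M}.

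Next I check that $\{u_n\}$ is a bounded Palais-Smale sequence of $I_\mu$ at level $c_\mu$. Indeed $I_\mu(u_n)=I_{\mu_n}(u_n)+\tfrac{\mu_n-\mu}{p+1}c_n=c_{\mu_n}+\tfrac{\mu_n-\mu}{p+1}c_n\to c_\mu$, since $c_{\mu_n}\to c_\mu$, $\mu_n\to\mu$ and $c_n$ is bounded; and for $\psi\in E$, using $I_{\mu_n}'(u_n)=0$, one has $I_\mu'(u_n)(\psi)=-(\mu-\mu_n)\intr|u_n|^{p-1}u_n\psi$, so that Holder's inequality together with the embedding $E\hookrightarrow L^{p+1}(\R^3)$ gives $\|I_\mu'(u_n)\|_{E^*}\leq C\,|\mu-\mu_n|\,\|u_n\|_E^{\,p}\to 0$. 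Corollary~\ref{convergence} now applies: up to a subsequence and a translation, $u_n\to u$ in $E$ with $u$ a solution of \eqref{eq12} and $I_\mu(u)=c_\mu$; moreover $u\not\equiv 0$ because $c_\mu>0$.

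To obtain in addition $u>0$, the whole argument is repeated with $I_\mu$ replaced by the functional $I_\mu^+$ having $(u^+)^{p+1}$ instead of $|u|^{p+1}$. None of the constructions above is affected, and the corresponding min-max value still equals $c_\mu$ (because $v\mapsto|v|$ is continuous on $E$, $I_\e(v)\le I_\e^+(v)$, $I_\mu(v)\le I_\mu^+(v)$, and $I_\mu^+(|v|)=I_\mu(v)$); but now any nontrivial critical point $u$, tested against $u^-$ and using $\phi_u\geq 0$, satisfies $\intr|\nabla u^-|^2+\intr\phi_u (u^-)^2=0$, whence $u^-\equiv 0$, so $u$ is a nonnegative solution of \eqref{eq12} and the strong maximum principle gives $u>0$; since $u\ge 0$ we also have $I_\mu(u)=I_\mu^+(u)=c_\mu$. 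The main obstacle throughout is the uniform bound for the approximating solutions $\{u_n\}$: without the Pohozaev identity one cannot rule out $\|u_n\|_E\to\infty$ when $p\in(2,3)$, exactly the range in which Palais-Smale sequences of $I_\mu$ are not known to be bounded.
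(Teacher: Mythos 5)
Your existence argument is the paper's own, step for step: approximate $\mu$ from below by $\mu_n\in J$, obtain critical points $u_n$ of $I_{\mu_n}$ at level $c_{\mu_n}$, derive the uniform bound by solving the $3\times 3$ linear system formed by the energy level, the Nehari identity and the Pohozaev identity \eqref{eqpoho} (your determinant differs from $\frac{p-2}{2(p+1)}$ only by a sign convention, and your formulas for $A_n$ and $\mu_n c_n$ match the paper's), check that $\{u_n\}$ is a bounded (PS) sequence for $I_{\mu}$ at level $c_{\mu}$, and invoke Corollary \ref{convergence}. Where you genuinely diverge is the positivity step. The paper keeps the functional $I_{\mu}$ and proves a posteriori that the solution $u$ already found does not change sign: it compares the three fibering maps $t\mapsto I_{\mu}(u_t)$, $I_{\mu}((u^+)_t)$, $I_{\mu}((u^-)_t)$ along the scaling $v_t(x)=t^2v(tx)$, uses the subadditivity $g(t)+h(t)\leq f(t)$ coming from the nonlocal term together with $\max f=f(1)=c_{\mu}$ and Proposition \ref{gs}, and concludes that one of $u^{\pm}$ vanishes. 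You instead rerun the whole scheme for the truncated functional $I_{\mu}^{+}$ with nonlinearity $(u^+)^{p+1}$, so that positivity comes for free by testing against $u^-$. Your route is standard and does work here, and your verification that the min-max levels coincide (via $I_{\mu}\le I_{\mu}^{+}$ and $I_{\mu}^{+}(|v|)=I_{\mu}(v)$) is correct; but its cost is hidden in the phrase ``none of the constructions above is affected'': you must re-establish Theorem \ref{monotonicity}, Proposition \ref{gs}, Corollary \ref{hola} and the splitting Proposition \ref{tipoWillemPag120} for $I_{\mu}^{+}$ (Brezis--Lieb for $(u^+)^{p+1}$, the lower bound $M[v_i]>\eta$ for nontrivial critical points of $I_{\mu}^{+}$, etc. --- all routine since such critical points are nonnegative solutions of \eqref{eq12}), and you also use without proof that $v\mapsto |v|$ is continuous on $E$ (true, since $\phi_{|v|}=\phi_v$ and the map is continuous in $D^{1,2}$, but it deserves a line). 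The paper's a posteriori argument buys you positivity with no re-verification of the compactness machinery; yours buys a more mechanical positivity proof at the price of carrying a modified functional through the entire construction.
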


\begin{proof}

First, assume that $\mu \in J$, where $J$ is defined in Theorem
\ref{monotonicity}. That theorem establishes the existence of a
bounded (PS) sequence at level $c_{\mu}$. By Corollary
\ref{convergence}, we conclude.

For general $\mu \in (\e, \e^{-1}]$, take a sequence $\{\mu_n\}
\subset J$, $\mu_n \to \mu$ increasingly, and $u_n$ critical points
of $I_{\mu_n}$ at level $c_{\mu_n}$. Since $\mu_n$ is increasing,
$c_{\mu_n} \to c_{\mu}$.

The functions $u_n$ satisfy both the equations
$I_{\mu_n}(u_n)=c_{\mu_n}$ and $I_{\mu_n}'(u_n)(u_n)=0$. Moreover,
they satisfy the Pohozaev identity \eqref{eqpoho}. We gather the
three equations in a system:
\begin{equation}
\left\{
\begin{array}{lr}
\frac{1}{2}A_n+\frac{1}{4}B_n-\frac{1}{p+1}C_n=c_{\mu_n}\\
\\
A_n+B_n-C_n=0\\
\\
\frac{1}{2}A_n+\frac{5}{4}B_n-\frac{3}{p+1}C_n=0,
\end{array}
\right.
\end{equation}
where $A_n=\intr |\nabla u_n|^2,$ $B_n=\intr \intr \frac{u_n^2(x)
u_n^2(y)}{|x-y|}\, dx \, dy$ and $C_n=\mu_n \intr |u_n|^{p+1}.$

Solving the above system, we get
$$
A_n=\frac{5p-7}{2(p-2)}\, c_{\mu_n},\ \ B_n=\frac{5-p}{p-2}\,
c_{\mu_n}, \ \ C_n=\frac{3(p+1)}{2(p-2)}\, c_{\mu_n}.$$

Since $c_{\mu_n}$ is bounded, we deduce that $A_n$, $B_n$ and $C_n$
must be bounded. In particular, the sequence $\{u_n\}$ is bounded in
$E$. Moreover,
$$ I_{\mu}(u_n)= I_{\mu_n}(u_n) + \frac{\mu_n-\mu}{p+1} \intr |u_n|^{p+1} = c_{\mu_n} + \frac{\mu_n-\mu}{p+1} \intr |u_n|^{p+1} \to c_{\mu}, $$
$$ I'_{\mu}(u_n)(v)= I'_{\mu_n}(u_n)(v)+ (\mu_n-\mu) \intr |u_n|^{p-1}u_n v \leq $$$$
|\mu_n-\mu| \|u_n\|_{L^{p+1}(\mathbb R^3)}^{p}
\|v\|_{L^{p+1}(\mathbb R^3)} \leq C |\mu_n-\mu| \|v\|_{E}.   $$ So,
$\{u_n\}$ is a bounded (PS) sequence for $I_{\mu}$ at level
$c_{\mu}$. By Corollary \ref{convergence} we conclude the existence
of a solution $u$.

We now prove that $u$ does not change sign. Recall that given any
$v \in E$, we denote $v_t(x)= t^2 v(t x)$. Define $f,\ g,\ h:
(0,+\infty) \to \R$ real functions as follows:
$$\begin{array}{l} f(t)= I_{\mu} (u_t) = \\ \\ \dis t^3\left [\frac{1}{2}\int_{\mathbb R^3}|\nabla u(x)|^2 dx +\frac{1}{4}\int_{\mathbb R^3}\int_{\mathbb R^3}\frac{u(x)^2u(y)^2}{|x-y|}dxdy \right]
-\mu \frac{t^{2p-1}}{p+1}  \int_{\mathbb
R^3}|u(x)|^{p+1}dx.\end{array}$$

$$\begin{array}{l} g(t)= I_{\mu} ((u^+)_t) = \\ \\ \dis t^3\left [\frac{1}{2}\int_{\mathbb R^3}|\nabla u^+(x)|^2 dx +\frac{1}{4}\int_{\mathbb R^3}
\int_{\mathbb R^3}\frac{u^+(x)^2u^+(y)^2}{|x-y|}dxdy \right] -\mu
\frac{t^{2p-1}}{p+1}  \int_{\mathbb
R^3}|u^+(x)|^{p+1}dx.\end{array}$$

$$\begin{array}{l} h(t)= I_{\mu} ((u^-)_t) = \\ \\ \dis t^3\left [\frac{1}{2}\int_{\mathbb R^3}|\nabla u^-(x)|^2 dx +\frac{1}{4}\int_{\mathbb R^3}
\int_{\mathbb R^3}\frac{u^-(x)^2 u^-(y)^2}{|x-y|}dxdy \right] -\mu
\frac{t^{2p-1}}{p+1}  \int_{\mathbb R^3}|u^-(x)|^{p+1}dx.
\end{array}$$

It is easy to check that $g(t)+h(t) \leq f(t)$ (the inequality
appears due to the nonlocal term). Reasoning as in Proposition
\ref{gs} we can show that $\max f = f(1) = I_{\mu}(u)=c_{\mu}$.
Take $t_1$, $t_2$ values at which the functions $g$, $h$ attain
their respective maxima. Assume, for instance, $t_1\leq t_2$; this
implies that $h(t_1) \geq 0$. So, $\max g = g(t_1)\leq g(t_1) +
h(t_1) \leq f(t_1) \leq \max f = c_{\mu}$. By the definition of
$c_{\mu}$, all previous inequalities must be equalities: in
particular, $h(t_1)=0$, and remember that $t_1 \leq t_2$. This is
only possible if $u^-=0$. If $t_1>t_2$, we can argue analogously
to prove that $u^+=0$.

So, up to a change of sign, we can assume $u\geq 0$. By the
maximum principle, we easily get that $u>0$.

\end{proof}

\begin{remark} We can also restrict ourselves
to the subspace of radial functions $E_r \subset E$ from the
beginning. In this way, one can prove that there exist radial
solutions at level $b_{\mu}$, defined as
$$b_{\mu}:=\inf_{\gamma\in\Lambda}\max_{t\in [0,1]}I_{\mu}(\gamma(t))>0, \ \mu \in [\e,\ \e^{-1}], $$
$$\Lambda=\left\{ \gamma \in C([0,1],E_r), \gamma(0)=0, \ I_{\e}(\gamma(1))<0
\right\}.$$

In this case the proof of the convergence of bounded (PS) sequences
is easier by compactness of the embedding $E_r \hookrightarrow
L^{p+1}(\mathbb R^3)$, see \cite{preprint}. Alternatively, we can
use Proposition \ref{tipoWillemPag120}; in a radial framework, $k$
must be equal to zero. The rest of the argument works as before.

These solutions have minimal energy among all solutions in $E_r$,
that is, an analogous to Proposition \ref{gs} holds for $E_r$ and
$b_{\mu}$.

We do not know if $b_{\mu}=c_{\mu}$ nor if both solutions may
coincide.

\end{remark}

\section{Bound states for $p>2$}
In this section we are concerned with the multiplicity of radial
(probably sign-changing) solutions of problem \eqref{eq12}. By
considering radial functions, we will be able to rule out the lack
of compactness due to the effect of translations.

Let us define:
\begin{equation}\label{Mtilde}\widetilde{\mathcal{M}}:=\left\{u\in E:
\int_{\mathbb R^3}|u|^{p+1}=1\right\}.\end{equation} We consider the
$C^1$ functional $J:E\rightarrow\mathbb R$
\begin{equation} \label{defJ} J(u)=\frac{1}{2}\int_{\mathbb R^3}|\nabla
u|^2dx+\frac{1}{4}\int_{\mathbb R^3}\phi_u u^2dx.
\end{equation}
Our scope is to find critical points of $J$ constrained on
$\widetilde{\mathcal M}$ in order to obtain solutions of
(\ref{eq12}).

We begin by the following lemma:
\begin{lemma}\label{lemmaJBoundedOnM} $J$ is bounded from below on $\widetilde{\mathcal{M}}$ and
$$\inf_{\widetilde{\mathcal{M}}} J>0.$$
\end{lemma}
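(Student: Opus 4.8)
The plan is to deduce both assertions from the scaling estimate of Lemma \ref{LemmaNormaM}. Boundedness from below requires no work: in the definition \eqref{defJ} both terms are nonnegative, since $\int_{\R^3}\phi_u u^2 = \int_{\R^3}\int_{\R^3}\frac{u^2(x)u^2(y)}{4\pi|x-y|}\,dx\,dy \geq 0$. Hence $J \geq 0$ on all of $E$, and in particular on $\widetilde{\mathcal M}$.

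The real content is the strict positivity of $\inf_{\widetilde{\mathcal M}} J$. The (mild) obstacle is that $\widetilde{\mathcal M}$ is unbounded in $E$, so one cannot conclude by a continuity-plus-compactness argument; instead I would exploit the homogeneity of the problem. First I would record that $J$ controls $M$ from below: writing $M[u] = \int_{\R^3}|\nabla u|^2 + 4\pi\int_{\R^3}\phi_u u^2$ and comparing term by term with \eqref{defJ}, there is an absolute constant $c_0 > 0$ (one may take $c_0 = \frac{1}{16\pi}$) such that $J(u) \geq c_0\,M[u]$ for every $u \in E$.

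Then I would apply Lemma \ref{LemmaNormaM} on $\widetilde{\mathcal M}$: if $\|u\|_{L^{p+1}(\R^3)}^{p+1}=1$, then $1 \leq C\, M[u]^{(2p-1)/3}$, whence $M[u] \geq C^{-3/(2p-1)} =: \eta_0 > 0$ (well defined since $2p-1>0$), a bound uniform in $u\in\widetilde{\mathcal M}$ because $C$ does not depend on $u$. Combining the two estimates gives $J(u) \geq c_0\,\eta_0 > 0$ for every $u \in \widetilde{\mathcal M}$, and therefore $\inf_{\widetilde{\mathcal M}} J \geq c_0\,\eta_0 > 0$. (That $\widetilde{\mathcal M}\neq\emptyset$ is clear: multiplying any $w\in E\setminus\{0\}$ by a suitable positive constant lands in $\widetilde{\mathcal M}$, using the embedding $E \hookrightarrow L^{p+1}(\R^3)$.) No step here is difficult once Lemma \ref{LemmaNormaM} is available; that lemma — proved through the scaling $u_t(x)=t^2u(tx)$ — is doing all the work, by converting the normalization $\int_{\R^3}|u|^{p+1}=1$ into a quantitative lower bound on the gradient/Coulomb energy.
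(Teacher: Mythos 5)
Your proof is correct, and every estimate checks out: $J\geq 0$ is immediate from \eqref{defJ}; the constant $c_0=\frac{1}{16\pi}$ in $J(u)\geq c_0 M[u]$ is right, since $\int_{\R^3}\phi_u u^2 = \frac{1}{4\pi}\int_{\R^3}\int_{\R^3}\frac{u^2(x)u^2(y)}{|x-y|}\,dx\,dy$ gives $M[u]=\int_{\R^3}|\nabla u|^2+4\pi\int_{\R^3}\phi_u u^2$ and $\frac{1}{16\pi}<\frac12$; and Lemma \ref{LemmaNormaM} applied under the normalization $\|u\|_{L^{p+1}}^{p+1}=1$ does yield the uniform bound $M[u]\geq C^{-3/(2p-1)}$. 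The route differs slightly from the paper's: the paper invokes only the raw continuous embedding $\|u\|_{L^{p+1}}\leq C\|u\|_E$ to conclude that $\|\cdot\|_E$ is bounded below on $\widetilde{\mathcal M}$, and then says "from the definition of $J$, the result follows" --- a step that tacitly requires converting a lower bound on $\|u\|_E=\bigl(\int_{\R^3}|\nabla u|^2+(\int\int\frac{u^2u^2}{|x-y|})^{1/2}\bigr)^{1/2}$ into a lower bound on $J$, e.g.\ via \eqref{M} or a case distinction between the gradient and Coulomb terms. You instead go through the scale-invariant estimate of Lemma \ref{LemmaNormaM}, which bounds $M[u]$ (the quantity directly comparable to $J$) from below in one stroke. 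Your version is marginally less elementary (it uses the scaling argument baked into Lemma \ref{LemmaNormaM}) but more explicit about the final step, and it is the same mechanism the paper itself uses in Corollary \ref{hola}; both arguments are equally valid here.
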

\begin{proof}
From \cite{preprint} we now that:
$$ \| u \|_{L^{p+1}} \leq C \| u \|_{E}.$$

This implies that in $\widetilde{\mathcal{M}},$ the norm $\| \cdot
\|_E$ is bounded below. From the definition of $J$, the result
follows.
\end{proof}

From now on we restrict ourselves to the radial subspace $E_r$ and
prove Theorem \ref{teo2}. Obviously Lemma \ref{lemmaJBoundedOnM}
continues to hold just restricting $J$ to $E_r$ and substituting
the manifold $\widetilde{\mathcal {M}}$ with
\begin{equation} \label{defM}\mathcal{M}:=\left\{u\in E_r: \int_{{\mathbb
R}^{3}}|u|^{p+1}=1\right\}.\end{equation}

Since the manifold $\mathcal{M}$ is  symmetric ($u\in\mathcal
M\Rightarrow -u\in\mathcal M$) and $J$ is an even functional on
it, we are naturally led to apply techniques from
Ljusternik-Schnirelman category theory. Precisely (see definition
\eqref{MinMaxLevels} below) we use min-max characterizations of
critical values using the Krasnoselski genus (we refer for
instance to \cite{AMbook} for the definition of the genus and for
the theory related).

Let $\mathcal{A}$ to denote the set of closed and symmetric (with
respect to the origin) subsets of $E_r/\{0\}$ and let $\gamma(A)$
be the genus of a set $A\in\mathcal{A}.$ For any $k\geq 1$ let
\begin{equation}\label{MinMaxLevels} b_k := \inf_{A\in\Gamma_k}
\max_{x\in A} J(x),\end{equation} where
$$\Gamma_k:=\{A\subset \mathcal M: A\in\mathcal{A},\; A \mbox{
compact and } \gamma(A)\geq k\}.$$ Our scope is to show that each
$b_k$ is a critical value of $J|_{\mathcal M}.$ First let us
observe that $\left\{b_k\right\}_{k\geq 1}$  is well defined,
indeed next lemma implies that the set $\Gamma_k\neq \emptyset,$
for any $k\geq 1:$
\begin{lemma}\label{lemmaGenusM}The manifold $\mathcal{M}\in \mathcal A$ and $\gamma(\mathcal{M})=+\infty.$ \end{lemma}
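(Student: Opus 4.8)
\textbf{Plan.} The statement splits into two independent claims: that $\mathcal{M}$ is an admissible set (closed, symmetric, and bounded away from $0$), and that its genus is infinite. The first is essentially immediate; the second is the standard device of detecting genus $k$ inside an arbitrary finite-dimensional subspace.

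First I would check $\mathcal{M}\in\mathcal{A}$. By the continuous embedding $E_r\hookrightarrow L^{p+1}(\R^3)$ recalled in the preliminaries, the functional $u\mapsto\int_{\R^3}|u|^{p+1}$ is continuous on $E_r$, so $\mathcal{M}$, being its level set at the value $1$, is closed in $E_r$. It is symmetric since $\int_{\R^3}|u|^{p+1}=\int_{\R^3}|-u|^{p+1}$, and clearly $0\notin\mathcal{M}$; hence $\mathcal{M}\in\mathcal{A}$.

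For the genus, fix $k\geq 1$. Since $C^{\infty}_{0,r}(\R^3)$ is infinite-dimensional and contained in $E_r$ (it is in fact dense, by Proposition \ref{app}), I would choose linearly independent functions $e_1,\dots,e_k\in C^{\infty}_{0,r}(\R^3)$ and set $V_k=\mathrm{span}\{e_1,\dots,e_k\}\subset E_r$. On the finite-dimensional space $V_k$ the map $u\mapsto\|u\|_{L^{p+1}(\R^3)}$ is a genuine norm (it is positive on every nonzero $u\in V_k$ since these are honest functions, and all norms on $V_k$ are equivalent), so the set $S_k:=\mathcal{M}\cap V_k=\{u\in V_k:\ \|u\|_{L^{p+1}(\R^3)}=1\}$ is exactly the unit sphere of $V_k$ for this norm. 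Consequently $S_k$ is compact, symmetric, and the radial retraction $u\mapsto u/\|u\|_{L^{p+1}(\R^3)}$ furnishes an odd homeomorphism between $S_k$ and the Euclidean sphere $S^{k-1}\subset V_k$. By the Borsuk--Ulam property of the Krasnoselskii genus, $\gamma(S_k)=k$. Since $S_k\subset\mathcal{M}$ is compact and symmetric, monotonicity of the genus gives $\gamma(\mathcal{M})\geq\gamma(S_k)=k$. As $k$ was arbitrary, $\gamma(\mathcal{M})=+\infty$.

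\textbf{Main difficulty.} There is no serious obstacle here: the only points requiring a word of care are that $E_r$ genuinely contains subspaces of every finite dimension (guaranteed by the density of $C^{\infty}_{0,r}(\R^3)$) and that the $L^{p+1}$-sphere inside such a subspace is oddly homeomorphic to a standard sphere, so that the classical properties of the genus (monotonicity and the value on spheres) apply verbatim.
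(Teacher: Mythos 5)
Your proof is correct, but it takes a different route from the paper's. For the closedness you argue via continuity of $u\mapsto\int_{\R^3}|u|^{p+1}$ on $E_r$ (which indeed suffices, and is if anything cleaner than the paper's appeal to compactness of the embedding). For the genus, the paper instead exhibits an odd homeomorphism between $\mathcal{M}$ and the unit sphere $S$ of $E_r$, namely $u\mapsto\lambda^2u(\lambda x)$ with $\lambda=\lambda(u)$ chosen so that the rescaled function has unit $L^{p+1}$ norm (using $\int|\lambda^2u(\lambda x)|^{p+1}dx=\lambda^{2p-1}\int|u|^{p+1}dx$), and then quotes the standard facts that $\gamma(S)=+\infty$ for the unit sphere of an infinite-dimensional Banach space and that the genus is invariant under odd homeomorphisms. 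You instead produce, for each $k$, a compact symmetric subset $S_k=\mathcal{M}\cap V_k$ of genus $k$ inside $\mathcal{M}$ and conclude by monotonicity; this is essentially the unpacked proof of the standard fact the paper cites, applied directly to $\mathcal{M}$. Your version is more self-contained and does not use the scaling structure of the problem at all, so it would apply verbatim to any such constraint manifold in an infinite-dimensional subspace; the paper's version is shorter once the textbook fact about spheres is taken for granted, and it highlights the same rescaling map used elsewhere in the paper. Both arguments are complete; the one detail you rightly flag, positivity of the $L^{p+1}$ norm on $V_k\setminus\{0\}$, is immediate since nonzero linear combinations of linearly independent functions are nonzero in $L^{p+1}$.
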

\begin{proof}
The closure of $\mathcal{M}$ follows from the compactness of the
embedding $E_r\hookrightarrow L^{p+1}(\mathbb R^3).$

Observe that $\mathcal{M}$ is homeomorphic to the unit sphere $S$
of $E_r$ (through the homeomorphism $u\mapsto \lambda^2u(\lambda
x)$), and that $\gamma(S)=+\infty$ because $E_r$ is an infinite
dimensional Banach space. From the invariance of the genus by
homeomorphism it follows that $\gamma(\mathcal{M})=+\infty.$
\end{proof}
As we have already mentioned, the advantage of restricting
ourselves to $E_r$ is that we have more convenient compactness
properties. Indeed, we have:
\begin{lemma}\label{lemma(PS)OnM} $J$ satisfies the (PS) condition on $\mathcal{M}$.
\end{lemma}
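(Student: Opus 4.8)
The plan is to show that any sequence $\{u_n\} \subset \mathcal{M}$ with $J(u_n)$ bounded and $J|_{\mathcal{M}}'(u_n) \to 0$ has a strongly convergent subsequence in $E_r$. First I would extract information from boundedness: since $J(u_n) \geq \frac14 M[u_n]$ and $J$ is bounded on $\mathcal{M}$, the sequence $\{u_n\}$ is bounded in $E_r$ (using \eqref{M} to pass from $M[u_n]$ to $\|u_n\|_E$). Hence, up to a subsequence, $u_n \rightharpoonup u$ in $E_r$. By the \emph{compactness} of the embedding $E_r \hookrightarrow L^{p+1}(\mathbb{R}^3)$ (quoted from \cite{preprint}), we get $u_n \to u$ strongly in $L^{p+1}(\mathbb{R}^3)$, so $\int_{\R^3}|u|^{p+1} = 1$; in particular $u \in \mathcal{M}$ and $u \neq 0$.

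Next I would translate the constrained criticality into an equation. By the Lagrange multiplier rule, $J|_{\mathcal{M}}'(u_n) \to 0$ means there exist $\lambda_n \in \R$ with
$$ J'(u_n) - \lambda_n \Phi'(u_n) \to 0 \ \text{ in } E_r', $$
where $\Phi(u) = \int_{\R^3}|u|^{p+1}$. Testing this with $u_n$ gives $M[u_n] - \lambda_n (p+1) \to 0$, and since $M[u_n]$ is bounded away from $0$ on $\mathcal{M}$ (Lemma \ref{lemmaJBoundedOnM}) and bounded above, $\lambda_n$ is bounded and bounded away from $0$; passing to a further subsequence, $\lambda_n \to \lambda > 0$.

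Now I would prove strong convergence. Test $J'(u_n) - \lambda_n \Phi'(u_n)$ against $u_n - u$. The term $\langle \Phi'(u_n), u_n - u\rangle = (p+1)\int |u_n|^{p-1}u_n (u_n-u) \to 0$ by strong $L^{p+1}$ convergence and Hölder. For the $J'$ term, $\langle J'(u_n), u_n - u \rangle = \int \nabla u_n \cdot \nabla(u_n-u) + T(u_n,u_n,u_n,u_n-u)$; the nonlocal piece tends to $T(u,u,u,0)=0$ by Lemma \ref{tecnico} (with $w_n = u_n \rightharpoonup u$, $z$ fixed — actually here I need the variant with the last slot being $u_n - u \rightharpoonup 0$, which follows from Lemma \ref{tecnico} applied with the weakly-null sequence in a fixed slot after using linearity, or directly from the bound $|T(u_n,u_n,u_n,u_n-u)|^2 \leq T(u_n,u_n,u_n,u_n)\,T(u_n-u,u_n-u,u_n,u_n) \to 0$ using Step 1 of Lemma \ref{tecnico}). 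Combining, $\int \nabla u_n \cdot \nabla(u_n - u) \to 0$, and since $u_n \rightharpoonup u$ in $D^{1,2}$ we also have $\int \nabla u \cdot \nabla(u_n-u) \to 0$; hence $\|\nabla(u_n - u)\|_{L^2}^2 \to 0$, i.e. $u_n \to u$ in $D^{1,2}(\R^3)$. Finally, $u_n \to u$ in $D^{1,2}$ together with the fact that $T(u_n-u,u_n-u,u_n-u,u_n-u) \to 0$ (shown above, using the Brezis–Lieb-type splitting from Proposition \ref{tipoWillemPag120} or directly Lemma \ref{tecnico}) gives, via Lemma \ref{weak}, that $u_n \to u$ in $E_r$.

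The main obstacle is the nonlocal term: one must verify that the quartic form $T$ behaves well along the weakly convergent sequence, so that $T(u_n,u_n,u_n,u_n-u) \to 0$ and the Coulomb energies converge. This is precisely what Lemma \ref{tecnico} and Lemma \ref{weak} are designed for, so the argument goes through, but care is needed in applying Lemma \ref{tecnico} with a weakly-null entry rather than a fixed one — the Hölder/Cauchy–Schwarz trick from the proof of Lemma \ref{tecnico} (Step 1) resolves this cleanly.
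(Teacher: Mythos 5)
Your overall strategy is the same as the paper's: boundedness from coercivity, extraction of a weak limit $\bar u$, compactness of $E_r\hookrightarrow L^{p+1}(\R^3)$ to keep $\bar u$ on $\mathcal M$, pairing the constrained derivative with $u_n-\bar u$, and concluding through Lemma \ref{weak}. (The paper uses the explicit tangent projection $P_{u_n}(u_n-\bar u)$ instead of your Lagrange-multiplier formulation; these are equivalent bookkeeping devices.) The gap is in your treatment of the quartic term. Neither of your two justifications of $T(u_n,u_n,u_n,u_n-u)\to 0$ is valid: Lemma \ref{tecnico} allows at most three weakly convergent entries against a \emph{fixed} fourth one, and after splitting by linearity you are left with $T(u_n,u_n,u_n,u_n)\to T(u,u,u,u)$, which is precisely (half of) the strong convergence you are trying to prove; likewise $T(u_n-u,u_n-u,u_n,u_n)=\int_{\R^3}\nabla\phi_{u_n-u}\cdot\nabla\phi_{u_n}$ pairs a weakly null sequence against a merely bounded one, which Step 1 of Lemma \ref{tecnico} (fixed $w$) does not cover. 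The claim is in fact false for general bounded weakly convergent sequences, even radial ones: for $u_n=u+n^{-5/4}v(\cdot/n)$ with $v$ a fixed radial bump, $u_n\rightharpoonup u$ in $E_r$ while the Coulomb energy of $u_n-u$ is a positive constant, so $T(u_n-u,u_n-u,u_n,u_n)\not\to 0$. The convergence of the nonlocal term must come from the (PS) information itself, not from weak convergence.

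The repair is short and is exactly what the paper does. Lemma \ref{tecnico} legitimately handles only the two \emph{cross} terms, which contain the fixed entry $\bar u$; this gives
\begin{equation*}
o(1)=J'(u_n)(u_n-\bar u)=M[u_n]-\int_{\R^3}\nabla u_n\cdot\nabla\bar u-T(u_n,u_n,u_n,\bar u)=M[u_n]-M[\bar u]+o(1),
\end{equation*}
hence $M[u_n]\to M[\bar u]$, i.e. $\|u_n\|_{D^{1,2}}^2+\|\phi_{u_n}\|_{D^{1,2}}^2\to\|\bar u\|_{D^{1,2}}^2+\|\phi_{\bar u}\|_{D^{1,2}}^2$. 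Since $u_n\rightharpoonup\bar u$ and $\phi_{u_n}\rightharpoonup\phi_{\bar u}$ in $D^{1,2}(\R^3)$, weak lower semicontinuity of each summand forces each norm to converge separately, so both convergences are strong, and Lemma \ref{weak} gives $u_n\to\bar u$ in $E_r$. With this substitution for your middle step, the rest of your argument goes through.
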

\begin{proof}
Let $\{u_n\}\subset \mathcal{M}$ be a (PS) sequence on
$\mathcal{M}.$ $\{u_n\}$ is bounded because $J(u_n)$ is bounded by
assumption and it is easy to see that the functional $J$ is
coercive. Since $E_r$ is a reflexive Banach space, it follows that
up to a subsequence $u_n$ converges weakly in $E_r$ to a certain
$\bar{u}\in E_r.$ From the compactness of the embedding
$E_r\hookrightarrow L^{p+1}(\mathbb R^3)$ it follows that also
$\bar{u}\in \mathcal{M}.$ We claim that $u_n\rightarrow \bar{u}$
strongly in $E_r.$

Observe that for any $u \in \mathcal{M}$, $T_u \mathcal{M}=\{ v
\in E_r: \int_{\R^3} |u|^{p-1}uv=0\}$. So, we can define the
following projection onto $T_u \mathcal{M}$:
$$ P_u: E_r \to T_u \mathcal{M}, \ \ P_u(v)= v- u \int_{\R^3} |u|^{p-1} u v.$$

Take $w_n=P_{u_n}(u_n - \bar{u})$. Clearly, $w_n \in T_{u_n}
\mathcal{M}$ and is bounded in norm. Moreover, $w_n =
(u_n-\bar{u}) + \lambda_n u_n$, where $\lambda_n = - \int_{\R^3}
|u_n|^{p-1} u_n (u_n-\bar{u}) \to 0$.

Since $\{u_n\}$ is a (PS) sequence on $\mathcal{M},$ it follows
that
$$0 \leftarrow \left(J|_{\mathcal{M}}\right)'(u_n)(w_n) =
J'(u_n)(u_n-\bar{u}) + \lambda_n J'(u_n)(u_n).$$ We now use the
notation and the result of Lemma \ref{tecnico}:
$$ 0 \leftarrow J'(u_n)(u_n-\bar{u})= M[u_n]-\int_{\mathbb R^3}\nabla u_n\nabla \bar u-T(u_n,u_n,u_n,\bar u)=M[u_n]-M[\bar u]+o(1).$$
So we conclude that $M[u_n] \to M[\bar{u}]$, that is, $$ \| u_n
\|_{D^{1,2}(\mathbb R^3)}^2 + \| \phi_{u_n} \|_{D^{1,2}(\mathbb R^3)}^2 \to \| \bar{u} \|_{D^{1,2}(\mathbb R^3)}^2 + \|
\phi_{\bar{u}} \|_{D^{1,2}(\mathbb R^3)}^2.$$ By Lemma \ref{weak} $u_n\rightarrow u$ in
$E.$
\end{proof}

We can now complete the proof:
\begin{proof}[Proof of Theorem \ref{teo2}]
From Lemma \ref{lemma(PS)OnM} and Lemma \ref{lemmaJBoundedOnM} we
know that $J|_{\mathcal M}$ verifies the (PS) condition and is
bounded from below. From the genus theory (see e.g. \cite[Theorem
10.9]{AMbook}) it follows that each $b_k$ defined in
\eqref{MinMaxLevels} is a critical value for $J|_{\mathcal M}.$
Moreover the sequence $\left\{ b_k\right\}$  is obviously
non-decreasing and in particular $b_1=\inf_{\mathcal M} J,$ which
is strictly positive by Lemma \ref{lemmaJBoundedOnM}. Last, since
$\gamma(\mathcal M)=+\infty,$ we also know (see e.g. \cite[Theorem
10.10]{AMbook}) that $b_k\rightarrow \sup_{\mathcal M} J=+\infty.$

In conclusion, for any $k\geq 1,$ there exists (at least) a pair
$u_k,-u_k\in \mathcal M$ of radial solutions of the equation
\begin{equation} \label{eq-multip}
-\Delta u+\phi_u u=\mu_k |u|^{p-1}u \ \ \mbox{ in }\mathbb
R^3,\end{equation} with $J(u_k) = b_k \to +\infty$.

In particular $u_1>0,$  indeed $b_1=\min_{\mathcal M} J,$
$|u_1|\in \mathcal M$ and $J(|u_1|)=J(u_1)=b_1,$ hence we can
assume $u_1\geq 0.$ The strict inequality follows from the strong
maximum principle.

We now intend to get rid of the Lagrange multiplier $\mu_k$. First
of all, we have the following relation between $\mu_k$ and $b_k$.

\begin{lemma} \label{EstimateOfMu} $\mu_k = \frac{3(p+1)}{2p-1} b_k$. \end{lemma}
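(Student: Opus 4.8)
The plan is to exploit the fact that $u_k \in \mathcal{M}$ is a critical point of $J|_{\mathcal{M}}$ at level $b_k$, which gives us two pieces of information: the constraint $\int_{\R^3} |u_k|^{p+1} = 1$, and the Lagrange multiplier equation $J'(u_k) = \mu_k (\text{something}) \cdot (\ldots)$. More precisely, the statement of Theorem~\ref{teo2}'s proof says $u_k$ solves $-\Delta u_k + \phi_{u_k} u_k = \mu_k |u_k|^{p-1} u_k$, so testing this equation against $u_k$ itself yields $M[u_k] = \mu_k \int_{\R^3} |u_k|^{p+1} = \mu_k$, using the normalization on $\mathcal{M}$. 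On the other hand, $b_k = J(u_k) = \frac{1}{2}\int |\nabla u_k|^2 + \frac14 \int \phi_{u_k} u_k^2$; this is \emph{not} simply a multiple of $M[u_k] = \int |\nabla u_k|^2 + \int\int \frac{u_k^2 u_k^2}{|x-y|}$ because the two terms in $M$ carry different coefficients in $J$.

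To close the gap I would bring in the Pohozaev identity from Proposition~\ref{poho}. Writing $A = \int_{\R^3}|\nabla u_k|^2$, $B = \int_{\R^3}\int_{\R^3}\frac{u_k^2(x)u_k^2(y)}{|x-y|}\,dx\,dy$, and $C = \mu_k\int_{\R^3}|u_k|^{p+1} = \mu_k$, the three relations
\begin{equation*}
A + B = C, \qquad \frac12 A + \frac54 B = \frac{3}{p+1} C
\end{equation*}
(the first from testing against $u_k$, the second being \eqref{eqpoho}) form a linear system in $A, B$ with right-hand side proportional to $C$. Solving it gives $A = \frac{5p-7}{2(p-2)} C$ and $B = \frac{5-p}{p-2} C$ (exactly as in the proof of Theorem~\ref{jia}). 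Then
\begin{equation*}
b_k = J(u_k) = \frac12 A + \frac14 B = \left(\frac{5p-7}{4(p-2)} + \frac{5-p}{4(p-2)}\right) C = \frac{2p-1}{3(p+1)}\,C = \frac{2p-1}{3(p+1)}\,\mu_k,
\end{equation*}
which rearranges to $\mu_k = \frac{3(p+1)}{2p-1} b_k$, as claimed. (Note $p \in (2,5)$ ensures the denominators are nonzero and the coefficients positive.)

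The only genuine subtlety — and the step I would be most careful about — is the validity of the Pohozaev identity: Proposition~\ref{poho} requires $u_k \in E \cap H^2_{\mathrm{loc}}(\R^3)$, so I would first invoke elliptic regularity to upgrade the weak solution $u_k$ of \eqref{eq-multip} to $H^2_{\mathrm{loc}}$ (the right-hand side $\mu_k|u_k|^{p-1}u_k - \phi_{u_k}u_k$ lies in $L^2_{\mathrm{loc}}$ since $u_k \in L^q$ for $q \in [3,6]$ and $\phi_{u_k} \in D^{1,2} \subset L^6$). Everything else is the elementary linear algebra above, which is essentially a copy of the computation already carried out in the proof of Theorem~\ref{jia}, so I would present it briskly and perhaps just refer back to that system.
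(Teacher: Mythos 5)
Your strategy is exactly the paper's: test \eqref{eq-multip} against $u_k$, combine the normalization $\int_{\R^3}|u_k|^{p+1}=1$ with $J(u_k)=b_k$ and the Pohozaev identity \eqref{eqpoho}, and solve the resulting linear relations between $A=\int_{\R^3}|\nabla u_k|^2$ and $B=\int_{\R^3}\int_{\R^3}\frac{u_k^2(x)u_k^2(y)}{|x-y|}\,dx\,dy$; your remark about first upgrading $u_k$ to $H^2_{\mathrm{loc}}$ so that Proposition~\ref{poho} applies is also appropriate. However, the intermediate linear algebra is wrong as written. The coefficients $A=\frac{5p-7}{2(p-2)}C$ and $B=\frac{5-p}{p-2}C$ are lifted from the proof of Theorem~\ref{jia}, where they express $A_n,B_n$ in terms of the \emph{energy level} $c_{\mu_n}$ of a three-equation system that includes $I_{\mu_n}(u_n)=c_{\mu_n}$ --- they are not the solution of the two equations you actually state here. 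Solving $A+B=C$ and $\frac12A+\frac54B=\frac{3}{p+1}C$ gives
\[
A=\frac{5p-7}{3(p+1)}\,C,\qquad B=\frac{2(5-p)}{3(p+1)}\,C,
\]
whence $b_k=\frac12A+\frac14B=\frac{4p-2}{6(p+1)}\,C=\frac{2p-1}{3(p+1)}\,\mu_k$, which is the claim. With your values the displayed identity $\frac{5p-7}{4(p-2)}+\frac{5-p}{4(p-2)}=\frac{2p-1}{3(p+1)}$ is simply false: the left-hand side equals $\frac{2p-1}{2(p-2)}$. So the method is the paper's and the final formula is correct, but the step connecting them needs to be redone with the right solution of the system (note also that, unlike in Theorem~\ref{jia}, here only two of the three relations are needed to determine $A$ and $B$ in terms of $C$; the third, $\frac12A+\frac14B=b_k$, is then what produces the relation between $b_k$ and $\mu_k$).
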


\begin{proof}[Proof of Lemma] Recall that $u_k \in \mathcal{M}$ is a
solution of \eqref{eq-multip}. Let us define $\alpha = \int_{\R^3}
|\nabla u_k|^2$, $\beta = \int_{\R^3} \int_{\R^3} \frac{u_k^2(x)
u_k^2(y)}{|x-y|} \, dx \, dy$. By multiplying equation
\eqref{eq-multip} by $u$ and integrating, we obtain: $\alpha +
\beta = \mu_k$. If we also take into account the equality
$J(u_k)=b_k$ and the Pohozaev identity \eqref{eqpoho}, we are led
with the system:
\begin{equation}
\left\{
\begin{array}{lr}
\alpha+ \beta =\mu_k, \\
\\
\frac{1}{2} \alpha + \frac{1}{4} \beta= b_k,\\
\\
\frac{1}{2} \alpha +\frac{5}{4} \beta =\mu_k.
\end{array}
\right.
\end{equation}

If we consider $\mu_k$ and $b_k$ as parameters, it is easy to
check that the above system is compatible only if $\mu_k =
\frac{3(p+1)}{2p-1}b_k$.

\end{proof}

In particular, the Lagrange multipliers $\mu_k$ are positive and
diverge as $k \to +\infty$. We now conclude the proof of Theorem
\ref{teo2}. Given $\lambda>0$, define again
$v_k(x)=\lambda^2u_k(\lambda x)$. Clearly, $v_k$ is a solution of:
$$ - \Delta v + \phi_v v = \lambda^{4-2p} \mu_k v^p.$$
By choosing $\lambda$ conveniently, we obtain a solution of
\eqref{eq12}.

\end{proof}

\section{The case $p=2$; proof of Theorem \ref{teo3}}
In this section we deal with the case $p=2.$ It differs from the
previous cases and turns out to be  critical  because, as already
observed in the introduction, it presents the following scaling
invariance: given a nontrivial solution $u$ of
\begin{equation}\label{eqP=2}-\Delta u+ \left ( u^2 \star
\frac{1}{4\pi|x|} \right ) u= \mu |u|u
\end{equation}
 and a parameter $\lambda\in\mathbb R,$ also the function
$\lambda^2u(\lambda x)$ is a solution.

Due to this invariance, for any solution $u$ of \eqref{eqP=2}, we
can re-scale it so that $\int_{\R^3} |u|^3=1$. Moreover, here we
will look for radial solutions only. As in the previous section,
our approach will be to find critical points of the functional $J$
on the manifold $\mathcal{M}$, where $J$ and $\mathcal{M}$ are
defined in \eqref{defJ}, \eqref{defM} respectively.

It is easy to check that all the procedure used in previous
section works also for $p=2$. Indeed, also the embedding $E_r
\hookrightarrow L^3(\R^3)$ is compact (see \cite{preprint}), which
is the essential tool to prove the (PS) property.

So, we obtain a sequence $b_k \to +\infty$ of critical values, and
a sequence of Lagrange multipliers $\mu_k$, with $\mu_k = 3 b_k$
(by Lemma \ref{EstimateOfMu}). Hence there exist solutions $\pm u_k$
of the problem:
\begin{equation} \label{eq-multip2}
-\Delta u+\phi_u u=\mu_k |u|u \ \ \mbox{ in }\mathbb
R^3,\end{equation}

The main difference is that now we cannot get rid of the Lagrange
multiplier as in the case $p>2$. In this way we conclude the
result of Theorem \ref{teo3}.

We point out that this is not a problem of the method of the
proof, but it is something intrinsic of the problem. As commented
previously, in the case $p=2$ the problem is invariant under the
transformation $t^2u(tx)$. It is quite reasonable then that
solutions appear only for certain values $\mu_k$, and in such case
we have a curve of solutions.

Indeed, it is easy to obtain the following non-existence result:

\begin{proposition}\label{teoP=2NonExistence}
For $\mu<2$ equation \eqref{eqP=2} has only the trivial solution
$u=0.$\end{proposition}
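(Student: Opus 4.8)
The plan is to combine the three basic identities available for any solution of \eqref{eqP=2}: the equation tested against $u$, the energy/functional normalization, and the Pohozaev identity \eqref{eqpoho}. For a nontrivial solution $u\in E\cap H^2_{loc}(\mathbb R^3)$ of \eqref{eqP=2} (here $p=2$), write $\alpha=\intr|\nabla u|^2$, $\beta=\intr\intr\frac{u^2(x)u^2(y)}{|x-y|}\,dx\,dy$ and $\delta=\intr|u|^3$. Testing \eqref{eqP=2} against $u$ gives $\alpha+\beta=\mu\,\delta$, and the Pohozaev identity \eqref{eqpoho} with $p=2$ gives $\frac12\alpha+\frac54\beta=\mu\,\delta$. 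Subtracting, we get $\frac12\alpha=-\frac14\beta$, hence $\beta=-2\alpha$; but $\alpha,\beta\ge 0$ forces $\alpha=\beta=0$, which already gives $u=0$. So in fact one does not even need the hypothesis $\mu<2$ for solutions in $H^2_{loc}$; the interest of the statement is that it should hold for all solutions in $E$, where Pohozaev may not be directly available.

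So the real plan must avoid Pohozaev and instead use the inequality \eqref{lions}, namely $\intr|u|^3\le\frac12\intr(|\nabla u|^2+|\nabla\phi_u|^2)=\frac12 M[u]$, which is the key estimate valid for all $u\in E$. Given a nontrivial solution $u$ of \eqref{eqP=2}, testing against $u$ yields $M[u]=\intr(|\nabla u|^2+\phi_u u^2)=\mu\intr|u|^3=\mu\,\delta$. Combining with \eqref{lions}, $\delta\le\frac12 M[u]=\frac{\mu}{2}\delta$. If $u\not\equiv 0$ then $\delta=\intr|u|^3>0$ (a solution with $\intr|u|^3=0$ would be identically zero, since then $M[u]=0$), so we may divide by $\delta$ to obtain $1\le\frac{\mu}{2}$, i.e. $\mu\ge 2$. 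Contrapositively, if $\mu<2$ the only solution is $u=0$. This is clean and requires nothing beyond \eqref{lions} and multiplying the equation by $u$.

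The one point that needs a word of care is the justification of multiplying \eqref{eqP=2} by $u$ and integrating, i.e. the identity $\intr(|\nabla u|^2+\phi_u u^2)=\mu\intr|u|^3$ for a weak solution $u\in E$. Since $u\in E$, we have $u\in L^3(\mathbb R^3)$ and $\phi_u\in D^{1,2}(\mathbb R^3)$, so all three integrals are finite, and the identity is exactly the statement $I_\mu'(u)(u)=0$ (with $p=2$), which holds because $u$ is a critical point of the $C^1$ functional $I_\mu$ on $E$ and $u\in E$ is an admissible test direction. No truncation argument is even needed. The strict positivity $\delta>0$ for $u\neq 0$ follows because $\delta=0$ implies $u=0$ a.e., hence $M[u]=\mu\delta=0$ and $u\equiv 0$ in $E$. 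I expect no genuine obstacle here; the only thing worth stating explicitly is that \eqref{lions} is sharp enough to give precisely the threshold $\mu=2$, which matches the scaling invariance: the inequality \eqref{lions} is itself scaling invariant under $u\mapsto t^2u(tx)$, consistent with the fact that for $p=2$ no scaling can improve the bound.
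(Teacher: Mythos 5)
Your second paragraph is exactly the paper's proof: multiply \eqref{eqP=2} by $u$, integrate to get $\mu\intr|u|^3=\intr\bigl(|\nabla u|^2+|\nabla\phi_u|^2\bigr)$, and invoke \eqref{lions} to force $\mu\ge 2$ unless $u\equiv 0$. That part is correct and complete.

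However, your opening digression contains a sign error you should fix before discarding it for the wrong reason. For $p=2$ the two identities are $\alpha+\beta=\mu\delta$ and $\tfrac12\alpha+\tfrac54\beta=\mu\delta$; subtracting gives $\tfrac12\alpha-\tfrac14\beta=0$, i.e.\ $\beta=2\alpha\ge 0$, which is perfectly consistent and yields no contradiction (it is exactly what produces $I_\mu(u)=0$ for all solutions, as noted in the paper's remark after Theorem \ref{teo3}). Had your claim been true, Theorem \ref{teo3} — the existence of nontrivial solutions for $p=2$ — would be vacuous, which should have been a warning sign. The correct reason Pohozaev cannot prove this proposition is not regularity of the solution but that, for $p=2$, Pohozaev combined with $I_\mu'(u)(u)=0$ carries no information about the size of $\mu$; the threshold $\mu=2$ genuinely comes from the quantitative inequality \eqref{lions}, as in your (and the paper's) actual argument.
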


\begin{proof} We just multiply \eqref{eqP=2} by $u$, integrate, and recall inequality
\eqref{lions}:
$$ \mu \int_{\R^3} |u|^3 = \int_{\R^3} |\nabla u|^2 + |\nabla
\phi_u|^2 \geq 2 \intr |u|^3.$$

\end{proof}

\begin{remark} By multiplying \eqref{eqP=2} by u, integrating and using the
Pohozaev-type inequality \eqref{eqpoho} we get $I_{\mu}(u) = 0.$ So,
for $p = 2,$ all possible solutions of \eqref{eqP=2} have energy
equal to zero; this is another implication of the degeneracy of the
problem. Moreover, $I_{\mu}$ does not exhibit a mountain-pass
geometry; just observe that $0$ is not a proper local minimum for
$I_{\mu},$ since $I_{\mu}\equiv  0$ along any curve of solutions
$\lambda^2 u(\lambda x).$
\end{remark}

\begin{remark} One could consider
the minimization problem: $\inf \{J(u) : u \in\widetilde{\mathcal
M}\},$ see \eqref{Mtilde}, \eqref{defJ}. Observe that here we do
not assume radial symmetry. If $p > 2$ it can be proved that this
minimum exists; this was pointed out to us by Denis Bonheure in a
personal communication. Indeed, let us define the infimum:

$$ m_{\alpha} = \inf \left \{J(u):\ u\in E,
\ \int_{\mathbb R^3}|u|^{p+1}= \alpha \right\}.$$

By using the usual curve $ \lambda \mapsto \lambda^2u(\lambda x)$,
we can prove that $m_{\alpha+\beta} < m_{\alpha} + m_{\beta}$. So,
we can use the ideas of \cite[part 1, section 1]{lions} to avoid
dichotomy.

Instead, in Section 2 we have preferred to consider the free
functional $I_{\mu}.$ These arguments are more general and could
be useful to deal with nonlinearities different from $|u|^{p-1}u.$

The case $p=2$ seems to be much harder, and we do not know if there
exists a minimizer. If we define:
$$m_{\alpha} = \inf \{J(u) :\ u\in
E,\ \int_{\mathbb R^3} |u|^3 dx = \alpha\},$$ it is easy to check
that $m_{\alpha +\beta} = m_{\alpha} + m_{\beta}.$ So, the ideas of
\cite{lions} do not work, and indeed one can construct minimizing
sequences where dichotomy appears.
\end{remark}

\section{Decay estimates}
In this section we show that when $p>2,$ the radial solutions of
\eqref{eq12} have an  exponential decay  at infinity. Hence, in
particular, they belong to $L^2(\mathbb R^3).$ This result applies
both to the radial ground states and to the (non-positive) radial
bound states found in Section 3.

The main result of this section is the following:
\begin{theorem}\label{teodecay}
Assume $p\in (2,5)$ and let $u\in E$ be a radial solution of
\eqref{eq12}. Then there exist $C_1,C_2>0$ and $R>0$ such that
$$u(r)\leq C_1{r^{-\frac{3}{4}}}e^{-C_2\sqrt{r}} \ \mbox{ if }\; r>R.$$
\end{theorem}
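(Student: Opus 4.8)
The plan is to treat the radial equation as a linear ODE with a potential and run a comparison (barrier) argument. Since $u \in E \subset L^q(\R^3)$ for $q\in[3,6]$ and $u$ is a radial solution, standard elliptic regularity and the radial Sobolev embedding give that $u(r)\to 0$ as $r\to\infty$ and $u$ is (after the usual bootstrap) classical away from the origin. Writing the equation in radial form as $-u'' - \frac{2}{r}u' + \phi_u(r)\, u = \mu |u|^{p-1}u$, or equivalently $-(r^2 u')' + r^2\phi_u u = \mu r^2 |u|^{p-1}u$, the key observation is that the nonlocal term $\phi_u = \frac{1}{4\pi|x|}\star u^2$ is, for a radial $u$, a positive decreasing function of $r$; moreover by Newton's theorem $\phi_u(r) \geq \frac{1}{4\pi r}\int_{\R^3} u^2$ whenever the total charge is finite. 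I would first establish that $\phi_u(r) \geq \frac{c}{r}$ for $r$ large: this needs that $\int u^2 > 0$, which is clear, but one must be slightly careful since a priori we only know $u \in L^q$ for $q\ge 3$, not $u\in L^2$; however, local integrability of $u^2$ on any ball suffices for $\phi_u(r)\ge \frac{1}{4\pi r}\int_{B_1}u^2 =: \frac{c_0}{r}$ for $r\ge 1$, and $c_0>0$ since $u\not\equiv 0$.

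Next, since $u(r)\to 0$, for $r\ge R$ large we have $\mu|u|^{p-1} \le \frac{1}{2}\cdot\frac{c_0}{r}$ (using $p>2$ so the nonlinear coefficient is genuinely lower-order once $u$ is small). Hence on $\{r\ge R\}$,
$$ -u'' - \frac{2}{r}u' + \frac{c_0}{2r}\, u \le -u'' - \frac{2}{r}u' + \Big(\phi_u - \mu|u|^{p-1}\Big)u = 0, $$
so $u$ is a subsolution of the linear operator $L v = -v'' - \frac{2}{r}v' + \frac{c_0}{2r}v$. The remaining task is to exhibit a supersolution (barrier) of $L$ of the claimed form $w(r) = C_1 r^{-3/4} e^{-C_2\sqrt r}$ and invoke the maximum principle on $r\ge R$ (with $u(R)$ finite and $u,w\to 0$ at infinity) to conclude $u\le w$. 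A direct computation with $w = r^{-3/4}e^{-C_2\sqrt r}$ shows $w'' + \frac{2}{r}w' = \big(\frac{C_2^2}{4r} + O(r^{-3/2})\big) w$, so choosing $C_2^2/4 < c_0/2$, i.e. $C_2 < \sqrt{2 c_0}$, makes $L w = \big(\frac{c_0}{2r} - \frac{C_2^2}{4r} + O(r^{-3/2})\big) w > 0$ for $r$ large enough; the power $r^{-3/4}$ is exactly what kills the first-order term $\frac{2}{r}w'$ against the $e^{-C_2\sqrt r}$ factor (this is the standard decay rate for $-\Delta + \frac{c}{|x|}$ in $\R^3$, a Yukawa-type potential). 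Then enlarging $C_1$ so that $w(R)\ge u(R)$ and applying the comparison principle to $w-u$ on $[R,\infty)$ finishes the argument.

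The main obstacle I anticipate is the lower bound $\phi_u(r)\ge c_0/r$ and making the comparison rigorous at the ODE level despite the potential $\phi_u$ being merely continuous (not smooth) and the solution a priori only in $E\cap H^2_{\mathrm{loc}}$: one must either work with the weak maximum principle directly in $H^1$ on the exterior domain, or use the radial ODE formulation and justify that $u$ is $C^2$ on $(0,\infty)$ via interior regularity (here $\phi_u$ is $C^{0,\alpha}_{\mathrm{loc}}$ since $u^2\in L^{q/2}$, which upgrades $u$ to $C^{2,\alpha}_{\mathrm{loc}}$ away from $0$). A secondary point is to verify $u(r)\to 0$ uniformly, which follows from $u\in D^{1,2}$ radial together with the pointwise radial estimate $|u(r)|\le C r^{-1/2}\|\nabla u\|_{L^2}$. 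Once these are in place the barrier computation is routine, and the $L^2$ membership is an immediate corollary of the exponential decay.
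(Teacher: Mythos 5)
Your overall strategy (lower bound $\phi_u(r)\gtrsim 1/r$, then a barrier $w=C_1r^{-3/4}e^{-C_2\sqrt r}$ for the operator $-\Delta+\tfrac{c'}{|x|}$ on an exterior domain) is the same as the paper's, and your Newton's-theorem derivation of $\phi_u(r)\geq c_0/r$ and your supersolution computation are both fine. But there is a genuine gap at the step where you claim that, since $u(r)\to 0$, one has $\mu|u(r)|^{p-1}\leq \tfrac{1}{2}\tfrac{c_0}{r}$ for $r$ large. The fact that $|u|^{p-1}\to 0$ does not imply it is eventually below $c_0/(2r)$, since the right-hand side also tends to $0$. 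The only pointwise rate you have a priori is the radial estimate $|u(r)|\leq Cr^{-1/2}\|\nabla u\|_{L^2}$, which gives $|u|^{p-1}\lesssim r^{-(p-1)/2}$; this is $o(1/r)$ only when $p>3$. For $p\in(2,3]$ your inequality $\phi_u-\mu|u|^{p-1}\geq \tfrac{c_0}{2r}$ is unjustified, and without it $u$ is not a subsolution of your linear operator $L$, so the comparison with $w$ cannot start.

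The paper closes exactly this gap with an intermediate comparison: it first proves (their \eqref{inequalityCompareUPhi}) that $|u(x)|\leq\mu\,\phi_u(x)$ outside a large ball, by comparing $u$ with $\mu\phi_u$ on exterior domains $\{|x|>R_n\}$, where the radii $R_n$ are chosen so that $u(R_n)=a_n/R_n$ with $a_n\to 0$ (such radii exist because otherwise $\int r^2|u|^3\,dr$ would diverge, contradicting $u\in L^3$); this guarantees the boundary inequality $u(R_n)<\mu\phi_u(R_n)$ needed to run the weak maximum principle. Once $|u|\leq\mu\phi_u$ is known, one gets $\mu|u|^{p-1}\leq\mu^p\phi_u^{p-1}=\mu^p\phi_u^{p-2}\cdot\phi_u=o(1)\cdot\phi_u$ because $\phi_u\to 0$ at infinity and $p>2$, and hence $\phi_u-\mu|u|^{p-1}\geq\tfrac12\phi_u\geq c'/|x|$ (their \eqref{ineqPhiU}) for \emph{all} $p\in(2,5)$. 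In short: the nonlinear term must be shown to be dominated by $\phi_u$ itself, not merely by its own smallness, and that requires the extra comparison $|u|\leq\mu\phi_u$. The rest of your argument (regularity remarks, the $r^{-3/4}e^{-C_2\sqrt r}$ barrier, and the conclusion $u\in L^2$) would go through once this step is repaired.
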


\begin{proof}Let $u\in E$ be a radial  solution of the \eqref{eq12} and let as usual  $\phi_u(x)=\left ( u^2 \star \frac{1}{4\pi|x|} \right ).$
First of all, by a comparison argument,
\begin{equation}\label{lowerBoundForPhi}\phi_u(x) \geq \frac{c}{1+|x|}\mbox{ for some
}c>0.\end{equation}\\
We claim that for any $u \in E_r$ there exists a sequence $R_n \to
+\infty$ and a sequence $a_n \to 0$ such that $u(R_n) =
\frac{a_n}{R_n}$. The contrary would imply that there exists
$\epsilon>0$ and $R>0$ such that $|u(r)| \geq \frac{\epsilon}{r}$
for any $r>R$. But then $C \| u \|_E^3 \geq  \|u\|_{L^3(\mathbb R^3)}^3 \geq
\int_R^{+\infty} r^2 |u(r)|^3 \, dr = +\infty$, a
contradiction.\\

We now prove that there exists $R>0$ such that
\begin{equation}\label{inequalityCompareUPhi}
|u(x)|\leq\mu \phi_u(x) \  \mbox{ for }|x|>R.
\end{equation}

To prove that, we take $n$ large enough and use comparison
principles to compare $u$ and $\mu\phi_{u}$ in the complementary
of $B(0,R_n).$ We have
$$\left\{\begin{array}{lr}
-\Delta (u - \mu\phi_{u})= \mu|u|^{p-1}u- \phi_{u} u- \mu u^2 &
\mbox{ in
}|x|>R_n, \\
\\
(u-\mu\phi_u)(R_n) < 0 & \mbox{ in }|x|=R_n.
\end{array}\right.$$
\\
Multiplying the equation by $(u -\mu  \phi_{u})^+$  and integrating
in $\{|x|>R_n\}$ we get \begin{eqnarray*}&&\int_{|x|>R_n}|\nabla(u -
\mu\phi_u)^+|^2dx = \int_{|x|>R_n}\left\{\mu|u|^{p-1}u- \phi_u
u-\mu u^2\right\}(u -\mu \phi_u)^+dx\\
&&\qquad\leq\mu \int_{|x|>R_n}\left\{|u|^{p-1}u- u^2\right\}(u
-\mu\phi_u)^+dx\leq 0, \end{eqnarray*} where we used the fact that
$u>0$ when $(u -\mu \phi_u)^+\neq 0$ to eliminate the term $\phi_u
u$ and also
that $u \to 0$ at infinity  to obtain the last inequality.\\
Hence, $u \leq \mu\phi_u$ out of a certain fixed ball $B(0,R)$. In
the same way, we can show that $-u \leq \mu \phi_u$ and so
\eqref{inequalityCompareUPhi} is
proved.\\\\
From \eqref{lowerBoundForPhi}, \eqref{inequalityCompareUPhi} we
deduce that there exists $c'>0$ such that
\begin{equation}\label{ineqPhiU}
\phi_u(x)-\mu|u(x)|^{p-1}
\geq \frac{c'}{|x|} \ \mbox{ for }|x|>R.
\end{equation}

Inequality  \eqref{ineqPhiU} allows us to compare $u$ and $-u$ with
the radial solution $w$ of
$$\left\{
\begin{array}{lr}
-\Delta w+\frac{c'}{|x|}w=0\qquad\mbox{ if }|x|>R,\\
w= |u|\qquad\qquad\qquad\mbox{ if }|x|=R,\\
w\rightarrow 0\qquad\qquad\qquad\;\,\mbox{ if }|x|\rightarrow
+\infty.
\end{array}
\right.$$ More precisely, let us consider $u$ (the arguments for
$-u$ are similar), then we have
$$\left\{
\begin{array}{lr}
-\Delta (u-w) +\frac{c'}{|x|}(u-w)=\mu|u|^{p-1}u- \phi_u u + \frac{c'}{|x|}u\qquad \mbox{ if }|x|>R,\\
(u-w)\leq 0\qquad\qquad\qquad\qquad\qquad\qquad\qquad\qquad\quad\;\;\;\,\:\mbox{ if } |x|=R,\\
 u-w\rightarrow 0\qquad\qquad\qquad\qquad\qquad\qquad\qquad\quad\qquad\quad\;\;\,\mbox{ if }|x|\rightarrow
+\infty.
\end{array}
\right.
$$
Multiplying the equation by $(u -w)^+$  and integrating in
$\{|x|>R\}$ we get
\begin{eqnarray*}&&\int_{|x|>R}|\nabla(u -
w)^+|^2dx+\int_{|x|>R}\frac{c'}{|x|}\{(u-w)^+\}^2dx \\
&&\qquad\qquad\qquad =\int_{|x|>R}u\left(\mu|u|^{p-1}- \phi_u
+\frac{c'}{|x|}\right)(u -w)^+dx\leq 0,
\end{eqnarray*} where the last inequality follows from \eqref{ineqPhiU} and from the fact that,
since by weak maximum principle $w\geq 0,$ $u>0$ when $(u -w)^+\neq
0.$ Hence $u\leq w$ out of the ball $B(0,R).$ In the same way we
have $-u\leq w.$
\\\\
In conclusion we have proved that $|u| \leq w$ out of the ball
$B(0,R),$ but we know that $w$ has the exponential decay
$$w(r)\leq C \frac{1}{r^{\frac{3}{4}}}e^{-2c'\sqrt{r}} \ \mbox{ for }r> R'$$ (cfr. \cite[Section
4]{AmbrosettiMalchiodiRuiz}), for certain $C>0,$ $R'>0,$ hence the
theorem is proved.

\end{proof}

\begin{remark}
We conjecture that the same decay estimate holds also in the
nonradial case, as well as for $p=2$.
\end{remark}

\textbf{Acknowledgement:} I. Ianni would like to thank the members
of the Departamento de An\'alisis Matem\'atico of the University
of Granada, where this work was partially accomplished, for their
warm hospitality.

\end{document}